\setlist{noitemsep,topsep=-\parskip}
\numberwithin{equation}{section}
\theoremstyle{definition}
\newtheorem{defn}{Definition}[section]
\newtheorem{cons}[defn]{Construction}
\theoremstyle{plain}
\newtheorem{thm}[defn]{Theorem}
\newtheorem*{thm*}{Theorem}
\newtheorem{lem}[defn]{Lemma}
\newtheorem{cor}[defn]{Corollary}
\theoremstyle{remark}
\newtheorem{rem}[defn]{Remark}
\newtheorem{ex}[defn]{Example}
\newcommand{\IE}[1]{\begin{IEEEeqnarray*}{#1}}
\newcommand{\IEE}{\end{IEEEeqnarray*}}
\newcommand{\angles}[1]{\left\langle #1\right\rangle}
\newcommand{\RR}{\mathbb{R}}
\newcommand{\ZZ}{\mathbb{Z}}
\newcommand{\QQ}{\mathbb{Q}}
\newcommand{\CC}{\mathbb{C}}
\newcommand{\PP}{\mathbb{P}}
\newcommand{\oh}{\mathcal{O}}
\renewcommand{\tilde}{\widetilde}
\DeclareMathOperator{\Nef}{Nef}
\DeclareMathOperator{\Pic}{Pic}
\DeclareMathOperator{\lcm}{lcm}
\DeclareMathOperator{\trunc}{trunc}
\title[Explicit constructions of short virtual resolutions of truncations]{Explicit constructions of\\ short virtual resolutions of truncations}
\author{Lauren Cranton Heller}
\address{Department of Mathematics, University of Nebraska, 203 Avery Hall, Lincoln, NE 68588}
\email{\href{mailto:lheller2@unl.edu}{lheller2@unl.edu}}
\urladdr{\url{https://lcrantonh.github.io/}}
\subjclass{13D02,14M25}
\begin{document}

\begin{abstract}
	We propose a concept of truncation for arbitrary smooth projective toric varieties and construct explicit cellular resolutions for nef truncations of their total coordinate rings. We show that these resolutions agree with the short resolutions of Hanlon, Hicks, and Lazarev, which were motivated by symplectic geometry, and we use our definition to exhibit nontrivial homology in the commutative algebraic analogue of their construction.
\end{abstract}

\maketitle

Powers $\mathfrak m^\ell$ of the maximal homogeneous ideal $\mathfrak m$ in the coordinate ring of $\PP^n$ are central to the theory of syzygies on projective space.  They are resolved by Eagon--Northcott complexes, and these resolutions are linear \cite{BE75}.  Each $\mathfrak m^\ell$ is a truncation of the coordinate ring and defines the structure sheaf on $\PP^n$.

More generally, sheaves on a smooth toric variety $X$ correspond to (multi)graded modules over the total coordinate ring $S$ of $X$ \cite{Cox95}, and truncations have appeared in this context as well \cite{MS04,BCS22a}.  However, even in toric varieties with low dimension or Picard rank their resolutions are not well described, as no suitable analogue to the Eagon--Northcott complex has been constructed.  We fill this gap.

\vspace{-\parskip}

\begin{thm*}[\ref{thm:res}]
	For each $d\in\Nef X$ there is a complex $F_\bullet$ (see Construction~\ref{cons:F}) which
	\begin{itemize}
		\item  has a cellular structure supported on the polytope of monomials in $S_d$,
		\item  is a free resolution of the possibly larger monomial ideal $\trunc_d(S)$ (see Definition~\ref{def:trunc-S}),
		\item  has length at most $\dim X$,
		\item  agrees with an existing short virtual resolution from symplectic geometry (see below).
	\end{itemize}
\end{thm*}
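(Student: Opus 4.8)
The plan is to establish the four asserted properties of the complex $F_\bullet$ of Construction~\ref{cons:F} in turn; writing $P$ for the polytope of monomials in $S_d$, the first and third are bookkeeping on the construction, the fourth is a comparison of combinatorial recipes, and the exactness in the second is the substantive point.

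\emph{Cellular structure and length.} Directly from Construction~\ref{cons:F} the term of $F_\bullet$ in homological degree $i$ is $\bigoplus_\sigma S(-\deg m_\sigma)$, the sum running over the $i$-dimensional faces $\sigma$ of the polyhedral subdivision of $P$ used there, with $m_\sigma$ the monomial labelling $\sigma$, and the differential is the signed cellular boundary map in which the incidence coefficient of a facet $\tau\subset\sigma$ is multiplied by $m_\sigma/m_\tau$. This quotient is again a monomial, since $m_\tau\mid m_\sigma$, so the differential is $\Pic X$-homogeneous and $F_\bullet$ is precisely the cellular free complex of a monomial-labelled polyhedral complex with underlying space $P$. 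Its length is the dimension of that complex, namely $\dim P$, and $\dim P\le\dim X$ because $P$ is the polytope of a divisor on $X$; this is the third property.

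\emph{Resolution of $\trunc_d(S)$.} For exactness I would invoke the standard acyclicity criterion for cellular resolutions (Bayer--Sturmfels): the cellular complex of the labelled polytope $P$ is a free resolution of the ideal $I$ generated by the labels of its $0$-cells precisely when, for every monomial $x^b\in S$, the subcomplex $P_{\preceq x^b}$ of faces $\sigma$ with $m_\sigma\mid x^b$ is empty or acyclic. Since $m_\sigma$ is the least common multiple of the labels of the vertices of $\sigma$, membership in $P_{\preceq x^b}$ depends only on the vertices of $\sigma$, so $P_{\preceq x^b}$ is the induced subcomplex on the vertices $v$ with $m_v\mid x^b$. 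I would then verify that the subdivision used in Construction~\ref{cons:F} refines the rational hyperplane arrangement controlling these divisibility constraints; this compatibility forces the support of $P_{\preceq x^b}$ to be the convex region carved out of $P$ by the conditions defining divisibility into $x^b$, so $P_{\preceq x^b}$ is contractible. It then remains to identify the ideal $I$ with $\trunc_d(S)$ of Definition~\ref{def:trunc-S}, and this is where the phrase ``possibly larger'' enters: in the multigraded Cox ring $\trunc_d(S)$ in general strictly contains the ideal generated by $S_d$ alone, since multiplication of global sections need not be surjective, so one must check that the $0$-cells of the subdivision minimally generate the whole truncation rather than merely its degree-$d$ part. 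I expect the contractibility of the $P_{\preceq x^b}$ in the general toric setting, together with this identification of the $0$-cells, to be the main obstacle, since both rest on a careful comparison of the $\Pic X$-grading with the lattice-point combinatorics of $P$.

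\emph{Agreement with the symplectic construction.} Finally, I would recall the short virtual resolution that Hanlon, Hicks, and Lazarev attach to $d$, which is likewise built cellularly from $P$: its terms are indexed by the faces of a polyhedral subdivision, its twists are recorded by lattice points, and its differential is a signed incidence map. It then suffices to match the two labelled polyhedral complexes face by face --- identifying their subdivision of $P$ with the one in Construction~\ref{cons:F} and the label of each face with the corresponding line-bundle twist --- after which the equality of the two differentials reduces to a routine check of orientation and sign conventions.
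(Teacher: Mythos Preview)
Your approach to the core exactness claim is the paper's: invoke the Bayer--Sturmfels criterion and show each $P_{\preceq x^\beta}$ is contractible by showing it is convex. The paper does this in one line by noting that, since the label of a point $\gamma$ is $x^{\lceil\gamma\rceil}$ and $\beta$ is integral, the divisibility $x^{\lceil\gamma\rceil}\mid x^\beta$ is equivalent to the linear half-space conditions $\gamma_i\le\beta_i$; hence $P_{\preceq x^\beta}$ is the intersection of $D$ with a box, and therefore convex. Your phrase ``refines the rational hyperplane arrangement controlling these divisibility constraints'' is vaguer but points at the same fact.

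Two places where your proposal misreads the paper and would need correction:

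\emph{The definition of $\trunc_d(S)$.} You treat identifying the ideal of vertex labels with $\trunc_d(S)$ as a substantive step (``the main obstacle''), but in Definition~\ref{def:trunc-S} the ceiling truncation is \emph{defined} to be the ideal generated by the vertex labels of $D$, so there is nothing to check. The phrase ``possibly larger'' does not refer to failure of surjectivity of multiplication out of $S_d$; it refers to the fact that $D$ may acquire vertices which are not lattice points (this happens whenever $X$ is not unimodular, cf.\ Remark~\ref{rem:BPS}), and those extra vertices carry labels of degree different from $d$, so $\trunc_d(S)$ can strictly contain $\langle S_d\rangle$.

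\emph{The comparison with Hanlon--Hicks--Lazarev.} Your sketch assumes their complex is ``likewise built cellularly from $P$,'' but it is not: their resolution of the diagonal $G_\bullet$ is supported on the compact real torus $E=(M\otimes\RR)/M$, and what must be compared with $F_\bullet(d)$ is the Fourier--Mukai output $\Phi(S(d))$, whose summands have the form $S(\deg x^{\lfloor-\gamma\rfloor})\otimes_\CC S_{d+\deg x^{\lfloor\gamma\rfloor}}$ indexed by cells of $E$, not of $P$. The actual argument (Theorem~\ref{thm:FM-of-S} via Lemma~\ref{lem:terms}) is an unfolding: one shows that the open cells of $D$ lying over a given cell $\gamma+M$ of $E$ are in bijection with the monomials of degree $d+\deg x^{\lfloor\gamma\rfloor}$, which matches summands, and then checks that under this bijection the cellular differential on $D$ agrees with the map~\eqref{eq:FM}. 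So a face-by-face matching does occur, but it is between $D$ and the torus $E$ with multiplicities, not between two subdivisions of the same polytope.
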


On products of projective spaces the resolutions of truncated modules are eventually linear, and again appear in the construction of Tate resolutions \cite{EES15}.  A slightly weaker condition characterizes multigraded Castelnuovo--Mumford regularity \cite{BCS22a}.  Products of projective spaces have the relatively unusual property that all effective divisors are numerically effective (nef).  Otherwise the elements of sufficiently positive degree may not form a submodule, unlike the ideals $\mathfrak m^\ell$, an issue which is addressed by Definition~\ref{def:trunc-S}.

Products of projective spaces nevertheless suggest a connection between the free resolutions of truncations and the output of a certain Fourier--Mukai transform.  Fourier--Mukai transforms are a type of derived functor.  They were used by Berkesch, Erman, and Smith to exhibit \emph{short virtual resolutions}, meaning resolutions by line bundles with length at most the dimension of the product of projective spaces \cite{BES20}.  Work of Bayer, Popescu, and Sturmfels in \cite{BPS01} implies an extension of this proof to unimodular toric varieties $X$ (see Remark~\ref{rem:BPS}).

For more general toric $X$ the edges of the cell complex from \cite{BPS01} intersect to form additional vertices whose algebraic meaning has been mysterious.  Remarkably, the relevant interpretation has come from symplectic geometry, where analagous pictures arise from the Bondal--Thomsen stratification of the torus \cite{Bon06,Tho00}.  Our $\tilde F(d)$ agrees with a recent construction by \cite{HHL24} of short virtual resolutions on all smooth projective toric varieties.

\begin{thm*}[\ref{thm:FM-of-S}]
	For each $d$ the sheafification $\tilde F_\bullet(d)$ with $F_\bullet$ as above
	\begin{itemize}
		\item  is the output of the Fourier--Mukai transform of $\oh(d)$ with respect to the resolution of the diagonal of Hanlon, Hicks, and Lazarev,
		\item  contains only twists in the Bondal--Thomsen collection of $X$ (see Lemma~\ref{lem:terms}).
	\end{itemize}
\end{thm*}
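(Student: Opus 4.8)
The plan is to evaluate the Fourier--Mukai transform summand by summand and to recognize the result as $\tilde F_\bullet(d)$. Write $K_\bullet\to\oh_\Delta$ for the Hanlon--Hicks--Lazarev resolution of the diagonal: it is a cellular complex of line bundles on $X\times X$, supported on a polyhedral complex that refines a fundamental domain for the Bondal--Thomsen stratification of the torus, and the summand indexed by a cell $\sigma$ has the shape $\oh(-E_\sigma)\boxtimes\oh(-D_\sigma)$, with the second factors $\oh(-D_\sigma)$ running over the Bondal--Thomsen collection. Since the transform is $\Phi=Rp_{2*}\bigl(p_1^*(-)\otimes K_\bullet\bigr)$, feeding it $\oh(d)$ produces the totalization of a double complex whose $\sigma$-column is $R\Gamma\bigl(X,\oh(d-E_\sigma)\bigr)\otimes\oh(-D_\sigma)$, the horizontal maps being induced by the cellular differential of $K_\bullet$. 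That $\Phi(\oh(d))\cong\oh(d)$ in $D^b(X)$ is automatic from $K_\bullet\simeq\oh_\Delta$; the real task is to pin down this particular representative and match it to the explicit $F_\bullet$.

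When $d\in\Nef X$ the first step is a vanishing statement: for every cell $\sigma$ occurring in $K_\bullet$ the groups $H^i\bigl(X,\oh(d-E_\sigma)\bigr)$ are concentrated in degree $0$. I would extract this from the combinatorics of the Bondal--Thomsen cells --- the divisors $E_\sigma$ live in the zonotope $\sum_i[0,D_i]$, and in fact only a restricted family occurs in $K_\bullet$ --- together with a toric cohomology vanishing theorem; the same input determines which $\sigma$ produce a nonzero $H^0$. Granting it, the double complex degenerates to a single complex whose term in homological degree $k$ is $\bigoplus_{\dim\sigma=k}H^0\bigl(X,\oh(d-E_\sigma)\bigr)\otimes\oh(-D_\sigma)$, and $H^0\bigl(X,\oh(d-E_\sigma)\bigr)$ is spanned by the monomials of $S_d$ whose exponent vectors fill the cell of the subdivision of $P_d$, the polytope of monomials in $S_d$, that is carved out by $\sigma$. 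For $d$ not nef one simply keeps the totalized double complex; the matching below is unchanged.

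It remains to identify this complex with $\tilde F_\bullet(d)$. By Construction~\ref{cons:F} the complex $F_\bullet$ is the cellular complex on the subdivision of $P_d$ induced by the Bondal--Thomsen walls; its $k$-cells are exactly the pairs $(\sigma,m)$ enumerated above, and --- after tracking degrees through Cox's correspondence and the normalization implicit in the notation $\tilde F_\bullet(d)$ --- the free generator on a cell $(\sigma,m)$ sheafifies to the line bundle $\oh(-D_\sigma)$. This is the accounting recorded in Lemma~\ref{lem:terms}, which at the same time yields the second bullet, since the $\oh(-D_\sigma)$ are precisely the Bondal--Thomsen collection. Finally, on each side the differential is the signed incidence sum over the codimension-one faces of the same polyhedral complex --- the cellular differential of $K_\bullet$ transported through $Rp_{2*}$ on one side, the cellular differential of Construction~\ref{cons:F} on the other --- so the two agree \emph{as complexes}, not merely up to quasi-isomorphism, which is what ``is the output of the Fourier--Mukai transform'' demands. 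This is also where the extra vertices formed by intersecting edges of the hull complex of \cite{BPS01} finally acquire algebraic meaning: they are exactly the cells that the Bondal--Thomsen walls carve out of $P_d$, that is, the extra line-bundle summands contributed by cells $\sigma$ invisible to the unimodular construction.

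The step I expect to be the real obstacle is the second paragraph: identifying precisely which Bondal--Thomsen cells survive in $\Phi(\oh(d))$ and proving the cohomology vanishing uniformly over them, as opposed to the soft fact that $\Phi(\oh(d))\cong\oh(d)$. A useful warm-up, and the natural place to fix orientations and signs, is the unimodular case of Remark~\ref{rem:BPS}: there $K_\bullet$ is the Bayer--Popescu--Sturmfels hull resolution of the diagonal, no wall cuts an edge of $P_d$, and the subdivision is trivial, so the matching is already implicit in \cite{BPS01}. Theorem~\ref{thm:res} then guarantees that for $d\in\Nef X$ the complex so produced really is a resolution, consistent with $\Phi(\oh(d))\cong\oh(d)$.
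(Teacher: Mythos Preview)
Your outline is essentially the paper's proof: compute the transform term by term, invoke a vanishing lemma (the paper's Lemma~\ref{lem:vanishing}) to collapse the spectral sequence to a single row for $d$ nef, use the bijection of Lemma~\ref{lem:terms} to match summands, and then check that the cellular differentials coincide. Two small corrections are worth making before you write it up. First, your sentence ``$H^0\bigl(X,\oh(d-E_\sigma)\bigr)$ is spanned by the monomials of $S_d$ whose exponent vectors fill the cell\ldots'' is not quite right: the basis monomials have degree $d+\deg x^{\lfloor\gamma\rfloor}$, not $d$, and the bijection of Lemma~\ref{lem:terms} sends such a monomial $x^\xi$ to the cell containing the point $\xi+\gamma-\lfloor\gamma\rfloor$; keeping this straight is exactly what makes the map-matching go through, since the paper verifies explicitly that multiplication by $x^{\lfloor\delta\rfloor-\lfloor\gamma\rfloor}$ in the second tensor factor carries the basis element for $\sigma$ to the basis element for $\tau$. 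Second, drop the remark ``for $d$ not nef one simply keeps the totalized double complex; the matching below is unchanged'': $F_\bullet$ is only constructed for $d\in\Nef X$, and without the vanishing of Lemma~\ref{lem:vanishing} the transform is a genuine double complex with no single-row cellular model to match.
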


It is not initially clear that the output of the Fourier--Mukai transform should be exact at the level of modules, so our result offers a novel source of free resolutions on toric varieties.

The resolution of the diagonal in \cite{HHL24} is one of several independent proofs of the existence of short virtual resolutions on all smooth projective toric varieties.  The resolution of the diagonal in \cite{And23} is also of the correct length, a related categorical fact is proven in \cite{FH23}, and a direct algebraic proof of the existence of short virtual resolutions can be found in \cite{BE24}.  From the perspective of commutative algebra the result generalizes Hilbert's Syzygy Theorem to the multigraded coordinate rings of smooth projective toric varieties, and was previously known for a number of specialized cases \cite{CK08,BES20,Yan21,BS24}.  Hanlon, Hicks, Lazarev, Favero, Huang, and Anderson were motivated by generation questions in the derived category which arise from homological mirror symmetry.

Given a general coherent sheaf $\mathcal F$ on a smooth projective toric variety $X$, constructing the known resolutions of length $\leq\dim X$ for $\mathcal F$ requires applying either a Fourier--Mukai transform (using an injective resolution) or the mirror functor for $X$, neither of which is easily computable.

In this paper we outline an explicit and straightforward procedure to construct the output of the Fourier--Mukai transform from Hanlon, Hicks, and Lazarev when applied to the structure sheaf $\oh(d)$ for $d$ numerically effective (nef).  See Section~\ref{sec:cellular} for the construction, Section~\ref{sec:example} for a fully worked example, and Theorem~\ref{thm:FM-of-S} for the isomorphism.

We propose that, as in the case of products of projective spaces, the monomial ideal generated by the labeled vertices in our construction should be considered a truncation of the total coordinate ring $S$ at $d$, as in Definitions \ref{def:trunc-S} and \ref{def:trunc-M}.  Notably, adding additional vertex labels besides the monomials of $S_d$ allows it to admit a clear cellular structure.  Our cellular resolutions are then the analogue of Eagon--Northcott complexes in this case.

Finally we use the output of the Fourier--Mukai transform for $S(d)$ to describe the homology of the short virtual resolutions of other modules, in Corollary~\ref{cor:homology}.  Based on existing constructions we would have expected exactness even at the level of modules.  However in Example~\ref{ex:homology} homology persists for all multiples of $d$.  This possibility supports the philosophy of \cite{BES20} that allowing irrelevant homology can produce new geometrically relevant resolutions.

\subsection*{Acknowledgments}

I would like to thank Jay Yang, who was heavily involved in early parts of this project.  Alexandra Seceleanu, Daniel Erman, and Greg Smith helped me along the way.  I also appreciate the friendly willingness of Andrew Hanlon, Jeff Hicks, Oleg Lazarev, and Reginald Anderson to introduce us to the field of symplectic geometry at an AIM workshop.  This work was partially supported by the NSF under grants DMS-1901848, DMS-2101225, and DMS-2401482, and by an AMS--Simons travel grant.  I was additionally supported by grant DMS-1928930 while in residence at the Mathematical Sciences Research Institute in the spring of 2024.

\section{Notation}

Let $X$ be a smooth projective toric variety with total coordinate ring $S$ and irrelevant ideal $B\subseteq S$.  Let $M$ be the lattice of characters of the torus and $N$ the lattice of one-parameter subgroups.  Use $\angles{m,u}$ to denote the pairing $M\times N\to\ZZ$.  We assume that $X$ is defined over an algebraically closed field by a fan of full dimension.

Let $u_1,\ldots,u_r\in N$ be the primitive rays in the fan for $X$.  Given $\alpha\in\ZZ^r$ write $x^\alpha$ for the monomial $\prod_{i=1}^r x_i^{\alpha_i}$ in $S$ where $x_i$ is the variable corresponding to the ray $u_i$.  Write $\alpha\geq 0$ when $\alpha_i\geq 0$ for all $i$.  Let $\lceil\alpha\rceil$ be the vector with components $\lceil\alpha_i\rceil$ and similarly for $\lfloor\alpha\rfloor$.

Denote by $\Nef X$ the set of numerically effective divisors in the Picard group $\Pic X$ of $X$, or equivalently the set of globally generated (base point free) line bundles \cite[Theorem~6.3.12]{CLS11}.  The global sections of a nef line bundle $\oh(d)$ correspond to the lattice points of a polytope in $M\otimes\RR$ \cite[p.~66]{Ful93}.

Given $d\in\Nef X$, a labeled cell complex $D$ and a free resolution $F_\bullet$ are defined in Section~\ref{sec:cellular} using an incidence function $\varepsilon$ on $D$ as \cite[\S 1]{BS98}.

Another labeled cell complex $E$ and locally free resolution of the diagonal $\tilde G_\bullet$ are defined in Section~\ref{sec:derived}, independent of $d$ but also using an incidence function $\varepsilon$.  We will use $\Phi$ for the corresponding Fourier--Mukai transform acting on modules, not sheaves (see Section~\ref{sec:derived}).  In particular we will be interested in $\Phi(S(d))$.

\section{Construction of cellular resolution}\label{sec:cellular}

Fix $d\in\Nef X\subset\Pic X$.  We will define a free resolution $F_\bullet$ by constructing a labeled cell complex $D$ with support the polytope of sections of $\oh(d)$ on $X$.

Tensor the presentation of the Picard group of $X$ with $\RR$ to obtain the following diagram, where $\ZZ^r\to\RR^r$ is given by inclusion and rows and columns are exact.  We also include the quotient of $M\otimes\RR$ by the image of $M$, which will be used in Section~\ref{sec:derived}.

\begin{align}\label{eq:diagram}
\xymatrix@=12pt{
	& 0\ar[d] & 0\ar[d]\\
	0\ar[r] & M\ar[r]\ar[d] & \ZZ^r\ar[r]\ar[d] & \Pic X\ar[r] & 0\\
	0\ar[r] & M\otimes\RR\ar[r]\ar[d] & \RR^r\\
	& (M\otimes\RR)/M\ar[d]\\
	& 0
	}
\end{align}

Fix a representative coset $\alpha+M$ for $d$ in $\ZZ^r$, i.e.\ so that $x^\alpha$ is a degree $d$ monomial in $S$.

Consider the (infinite) cell structure $C$ on $\RR^r$ whose codimension 1 skeleton is the union of all integral translates of the coordinate planes in $\ZZ^r$.  We can restrict the cells of $C$ to the linear space $\alpha\otimes 1+M\otimes\RR$ to obtain a cell structure on $\alpha\otimes 1+M\otimes\RR$.  We further intersect with the quadrant $\RR^r_{\geq 0}$ to obtain the cell complex $D$.

Since the boundary of $\RR^r_{\geq 0}$ is contained in the coordinate planes, $D$ is a subcomplex of the restriction of $C$.  Thus it has an induced cell structure.

Points in the intersection of $\alpha+M$ and $\ZZ^r_{\geq 0}$ are exactly the nonnegative vectors with the same image as $\alpha$ in $\Pic X$, i.e.\ the exponents of monomials of degree $d$ in $S$.  Thus $D$ is finite and contains these points among its vertices.  However additional vertices may arise in the induced cell structure, as cells in $C$ with dimension equal to the Picard rank of $X$ generally intersect $\alpha\otimes 1+M\otimes\RR$ in dimension 0.

\begin{rem}\label{rem:BPS}
	Bayer, Popescu, and Sturmfels show that lattice points are the only vertices exactly when $X$ is unimodular \cite[Proposition~2.2]{BPS01}.  Their construction in \cite[Theorem~6.2]{BPS01} then gives a short cellular resolution in terms of line bundles for a sufficiently positive twist of a coherent sheaf on $X$, and will agree with ours.
\end{rem}

Label each cell in $D$ with a monomial by choosing a vector $\alpha+\gamma$ in its interior and rounding up fractional exponents to $x^{\lceil\alpha+\gamma\rceil}=x^{\alpha+\lceil\gamma\rceil}$.  This is well defined because $\lceil\gamma\rceil\neq\lceil\gamma'\rceil$ would imply that $\gamma$ and $\gamma'$ are separated by a wall of $C$ in $\RR^r$.

\begin{cons}\label{cons:F}
Define the cellular complex $F_\bullet$ associated to the labeled cell complex $D$ as in \cite[\S 1]{BS98} using an incidence function $\varepsilon$.
\end{cons}

Note that $F_\bullet$ is finite because $D$ is finite.  We first prove some basic properties of $D$ and $F_\bullet$.  A fully computed example of the complex $F_\bullet$ can be found in Section~\ref{sec:example} and does not depend on Section~\ref{sec:derived}.

\begin{lem}\label{lem:lcms}
	If $\sigma\subseteq D$ is a cell whose boundary cells have labels $x^{\tau_1},\ldots,x^{\tau_s}$ then $\sigma$ has the label $\lcm\{x^{\tau_1},\ldots,x^{\tau_s}\}$, meaning the monomial with exponent the componentwise maximum of the vectors $\tau_1,\ldots,\tau_s$.
\end{lem}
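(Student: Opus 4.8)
The plan is to work directly with the cell structure $C$ on $\RR^r$ and the rounding-up labeling rule. First I would recall that a cell $\sigma$ of $D$ sits inside a cell of $C$, which is a product of intervals: in each coordinate $i$ it is either a closed unit interval $[n_i, n_i+1]$ (if the $i$-th coordinate is not constrained to an integer on $\sigma$) or a single integer $\{n_i\}$ (if it is). Label the cell by picking an interior point $\alpha + \gamma$ and rounding up; the key observation is that $\lceil \alpha_i + \gamma_i \rceil$ equals $n_i+1$ in the first case and $n_i$ in the second, so the label of $\sigma$ is determined purely by the combinatorial data of which coordinates are ``free'' and the integer thresholds $n_i$, independent of the chosen interior point (this is exactly the well-definedness already noted before the statement).

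Next I would compare $\sigma$ with a boundary cell $\tau_j$. A boundary cell of $\sigma$ is obtained by collapsing some subset of the free coordinates of $\sigma$ to one of the two endpoints of their intervals. In every coordinate $i$: if $i$ is collapsed to the lower endpoint $n_i$, then $\tau_j$ has $i$-th label exponent $n_i$, which is $\leq n_i+1$, the exponent of $\sigma$; if $i$ is collapsed to the upper endpoint $n_i+1$, or if $i$ was not free in $\sigma$ to begin with, then $\tau_j$ has the same $i$-th exponent as $\sigma$. Hence componentwise the exponent vector of each $\tau_j$ is $\leq$ the exponent vector of $\sigma$, so $x^\sigma$ is divisible by every $x^{\tau_j}$ and therefore by their least common multiple; in particular the componentwise maximum of the $\tau_j$ is $\leq$ the exponent of $\sigma$.

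For the reverse inequality I would argue coordinatewise that the maximum is actually attained. Fix a coordinate $i$ and let $e_i$ be the $i$-th exponent of $\sigma$. If $i$ is not a free coordinate of $\sigma$, then $\sigma$ and all its facets already share that $i$-th exponent, so the max in coordinate $i$ is $e_i$. If $i$ is free, then $e_i = n_i+1$, and I claim some boundary cell $\tau_j$ has $i$-th exponent $n_i+1$ as well: take any facet of $\sigma$ obtained by collapsing a single free coordinate $j \neq i$ to its upper endpoint — such a facet exists because $\sigma$ is a positive-dimensional cell (it has at least one free coordinate, and if $i$ is its only free coordinate then instead collapse $i$ to its upper endpoint), and on that facet coordinate $i$ is still free with the same interval, hence still has exponent $n_i+1$. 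Thus the componentwise maximum of the boundary labels equals the exponent of $\sigma$ in every coordinate, which is precisely the claim. The one case needing a little care is a $1$-dimensional cell $\sigma$ (an edge): here the two boundary cells are its endpoints, with $i$-th exponents $n_i$ and $n_i+1$ respectively, whose max is $n_i+1 = e_i$, so the statement still holds. I expect this attained-maximum direction — making sure that for each free coordinate some facet retains the rounded-up value — to be the only real point requiring attention; the divisibility direction is immediate from the rounding rule.
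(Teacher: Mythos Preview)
There is a genuine gap in your ``max attained'' direction. You describe boundary cells of $\sigma$ as obtained by collapsing free coordinates of the ambient cube to endpoints, but that is the combinatorics of faces of the cell $\tilde\sigma$ in $C$, not of faces of $\sigma$ in $D$. A face of $\tilde\sigma$ need not meet the affine subspace $\alpha\otimes 1 + M\otimes\RR$, so ``collapse a single $j\neq i$ to its upper endpoint'' need not produce any boundary cell of $\sigma$. Concretely, in the Hirzebruch example of Section~\ref{sec:example}, take the upper-left triangular $2$-cell of $D$ (vertices labeled $yz^2w$, $yz^3$, $xyz^2$). Its interior lies in the open $4$-cube $(0,1)\times(0,1)\times(2,3)\times(0,1)$ of $C$, so all four coordinates are free; but for $i = 1$ one checks that none of the three open facets $\{x_j = \text{upper endpoint}\}$ for $j\in\{2,3,4\}$ meets the affine plane, so your proposed witness does not exist. (The actual edges of this triangle lie in $\{x_1=0\}$, in $\{x_3=2\}$, and in $\{x_2=1\}\cap\{x_4=0\}$ respectively.) Your divisibility direction is fine, since it only uses that each $\tau_j$ sits inside \emph{some} face of $\tilde\sigma$.

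The paper's proof avoids constructing a witness. It fixes $i$, sets $n$ equal to the maximum of the boundary labels in that coordinate, and shows directly that interior points of $\sigma$ have $i$th coordinate either equal to $n$ or strictly between $n-1$ and $n$, so the ceiling is $n$ either way. To repair your route, replace the explicit facet with a convexity argument: the $i$th coordinate is an affine function on the compact convex set $\bar\sigma$, and it takes values strictly greater than $n_i$ on the interior of $\sigma$; hence either it is constant (and the whole boundary has $i$th coordinate $> n_i$) or its maximum is achieved on the boundary and exceeds $n_i$. In both cases some boundary cell has $i$th label exponent $n_i+1$.
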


\begin{proof}
	Fix $1\leq i\leq r$ and consider the $i$th coordinates $\tau_{1,i},\ldots,\tau_{s,i}$.  Let $n$ be their maximum and $n'$ their minimum.

	Note that the interior of $\sigma$ must be either disjoint from or contained within the hyperplane with $i$th coordinate equal to $n$.  If it is contained then its boundary is as well and $\sigma$ is labeled by $n=\tau_{1,i}=\cdots=\tau_{s,i}=n'$.

	If it is disjoint we must have $n-n'=1$, in order to avoid the hyperplane with $i$th coordinate equal to $n-1$.  Then $n>\gamma_i>n'=n-1$ for all $\gamma$ in the interior of $\sigma$, so $\lceil\gamma_i\rceil=n$.
\end{proof}

\begin{defn}\label{def:trunc-S}
	By the previous lemma all labels on $D$ are multiples of the labels on the vertices.  We call the ideal they generate the \emph{ceiling truncation} $\trunc_d(S)$ of $S$ at $d$.
\end{defn}

\begin{thm}\label{thm:res}
	The complex $F_\bullet$ is a cellular \emph{resolution} (of $\trunc_d(S)$).
\end{thm}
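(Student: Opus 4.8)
The plan is to invoke the standard acyclicity criterion for cellular resolutions, \cite[\S1]{BS98}: the cellular complex $F_\bullet$ supported on the labeled complex $D$ is a free resolution of the ideal generated by the vertex labels of $D$ if and only if, for every $\beta\in\NN^r$, the subcomplex $D_{\preceq\beta}$ of cells of $D$ whose label divides $x^\beta$ is either empty or acyclic over the ground field. By Lemma~\ref{lem:lcms} every label of $D$ is a multiple of a vertex label, so the ideal generated by the vertex labels is exactly $\trunc_d(S)$ as in Definition~\ref{def:trunc-S}; since $F_\bullet$ is cellular by Construction~\ref{cons:F}, it therefore suffices to verify this criterion.

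The main step is a geometric identification of $D_{\preceq\beta}$. Set $B_\beta=\{y\in\RR^r:0\le y_i\le\beta_i\text{ for all }i\}$; I claim $D_{\preceq\beta}$ is precisely $D\cap B_\beta$ (with its induced cell structure). For the inclusion from left to right, note that by the same analysis of coordinate slabs as in the proof of Lemma~\ref{lem:lcms} the $i$th exponent of the label of a cell $\sigma$ equals $\lceil y_i\rceil$ for any $y$ in the relative interior of $\sigma$, and also equals $\max\{\lceil z_i\rceil:z\in\bar\sigma\}$; hence the label of $\sigma$ divides $x^\beta$ if and only if $\bar\sigma\subseteq B_\beta$. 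For the reverse inclusion, any point of $D\cap B_\beta$ lies in the relative interior of a unique cell $\sigma$ of $D$, and there $\lceil y_i\rceil\le\beta_i$ for all $i$ (as $\beta_i\in\ZZ$), so $\sigma\in D_{\preceq\beta}$. The point requiring care is exactly this identification, and it works because every bounding hyperplane of $B_\beta$ is one of the integral coordinate hyperplanes defining the cell structure $C$, so passing to $B_\beta$ does not cut any cell of $D$ through its interior.

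Granting that identification, the conclusion is immediate: $D\cap B_\beta=\bigl(\alpha\otimes 1+M\otimes\RR\bigr)\cap\RR^r_{\ge0}\cap B_\beta$ is an intersection of convex sets, hence either empty or a convex polytope, and a convex polytope is contractible and in particular acyclic over any field. This verifies the criterion of \cite[\S1]{BS98} for every $\beta$, so $F_\bullet$ is a (cellular) free resolution of $\trunc_d(S)$. I expect the only real work to be the combinatorial bookkeeping of the previous paragraph; no delicate homological input is needed beyond the cited criterion, and the bound on the length (length at most $\dim X$) is separate and follows from the dimension of $D$.
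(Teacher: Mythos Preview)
Your proposal is correct and follows essentially the same approach as the paper's proof: both invoke the acyclicity criterion of \cite[Proposition~1.2]{BS98} and verify it by showing that the subcomplex $D_{\preceq\beta}$ coincides with the intersection of $D$ with the convex region $\{y:y_i\le\beta_i\}$, hence is convex and contractible. Your write-up is slightly more explicit about why intersecting with $B_\beta$ respects the cell structure (the bounding hyperplanes are already walls of $C$), but the argument is the same.
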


\begin{proof}
	By \cite[Proposition~1.2]{BS98} it suffices to show that for each $\beta\in\ZZ^r$ the subcomplex of $D$ induced by vertices with labels dividing $x^\beta$ is convex and thus contractible.

	Fix $\beta$ and let $B$ be the induced subcomplex.  Since $\beta$ is integral, $x^{\lceil\gamma\rceil}$ divides $x^\beta$ if and only if $\gamma_i\leq\beta_i$ for all $i$.  Thus $B$ consists of all points satisfying these inequalities, so it is the intersection of a convex subset of $\RR^r_{\geq 0}$ with the linear space $\alpha\otimes 1+M\otimes\RR$, so still convex.
\end{proof}

\section{Fourier--Mukai transform}\label{sec:derived}

Here we outline the construction of Hanlon, Hicks, and Lazarev in the language of cellular resolutions.  See \cite{HHL24} for the full generality, which applies to toric embeddings of smooth stacks and properly takes place in a lattice arising from the map.  For our purposes the authors resolve the diagonal sheaf on $X\times X$ using a lattice canonically isomorphic to $M$ and a stratification agreeing with the cell structure $C$ from Section~\ref{sec:cellular} (whose codimension 1 skeleton is the union in $\RR^r$ of all integral translates of the coordinate planes).

Specifically, restrict $C$ to the image of $M\otimes\RR$ in $\RR^r$ and consider the induced cell structure $E$ on the quotient $(M\otimes\RR)/M$, a real torus.  We will also need an incidence function $\varepsilon$ on $C$ which descends to the quotient in the sense that it is invariant under translation of pairs of open cells by $M$.

For $\gamma\in(0+M\otimes\RR)$ the label $x^{\lceil\gamma\rceil}$ is not invariant under translation by $M$, but its degree in $\Pic X$ is (see diagram \eqref{eq:diagram}).  If $\gamma$ is in the relative interior of a cell $\sigma$ and $\delta$ is in a boundary cell of $\sigma$ then $\frac{x^{\lceil\gamma\rceil}}{x^{\lceil\delta\rceil}}$ is also well defined up to translating both $\gamma$ and $\delta$ by $M$.

Let $R=S\otimes_\CC S$ be the total coordinate ring of $X\times X$.  Label the quotient in $E$ of the open cell containing $\gamma$ by $x^{\lceil\gamma\rceil}\otimes x^{\lceil-\gamma\rceil}$ and let $G_\bullet$ be the resulting cellular complex from $E$ using the incidence function $\varepsilon$, which has terms composed of summands:
\[R\left(-\deg\left[x^{\lceil\gamma\rceil}\otimes x^{\lceil-\gamma\rceil}\right]\right)=S(\deg x^{\lfloor-\gamma\rfloor})\otimes_\CC S(\deg x^{\lfloor\gamma\rfloor})\]

Note that in general it is not possible to choose a global set of representatives so that $\delta$ is in the boundary of the cell containing $\gamma$ in $C$ whenever that is true in $E$.  However it is possible to do so locally in order to define the map between the corresponding terms of $G_\bullet$:
\begin{align}\label{eq:G}
\xymatrix{
	S(\deg x^{\lfloor-\delta\rfloor})\otimes_\CC S(\deg x^{\lfloor\delta\rfloor}) &&&& S(\deg x^{\lfloor-\gamma\rfloor})\otimes_\CC S(\deg x^{\lfloor\gamma\rfloor})\ar[llll]_{\varepsilon(\gamma,\delta) x^{\lceil\gamma\rceil-\lceil\delta\rceil}\otimes x^{\lfloor\delta\rfloor-\lfloor\gamma\rfloor}}
}
\end{align}
It is also possible to have multiple maps between the same two cells if parts of the boundary are identified in the quotient---see Section~\ref{sec:example} for an example.

Hanlon, Hicks, and Lazarev show that $\tilde G_\bullet$ is a resolution of the diagonal, meaning a resolution of the structure sheaf of the diagonal subvariety in $X\times X$.

Short virtual resolutions for other modules are then constructed via Fourier--Mukai transforms, a type of derived functor that uses a resolution of the diagonal to produce resolutions of other complexes. We describe the process without motivation or proofs, which can be found for instance in \cite[\S 5.1 and \S 8.3]{Huy06}.  Denote the projections of $X\times X$ by $p$ and $q$:
\[\xymatrix@=12pt{
	& X\times X\ar[dl]_p\ar[dr]^q\\
	X && X
}\]
If $Q$ is a finitely generated $S$-module then the complex $Rp_*\left[\left(Lq^*\tilde Q\right)\otimes_L\tilde G_\bullet\right]$ is quasi-isomorphic to $\tilde Q$.  The terms of this complex can be computed using a spectral sequence for the derived pushforward and are direct sums of line bundles by the projection formula.

If $d$ is sufficiently positive then the spectral sequence for $\tilde Q(d)$ degenerates to a single row and the complex $Rp_*\left[\left(Lq^*\tilde Q(d)\right)\otimes_L\tilde G_\bullet\right]$ is a short resolution of $\tilde Q(d)$.  In particular the map \eqref{eq:G} becomes
\[\xymatrix{
	\oh(\deg x^{\lfloor-\delta\rfloor})\otimes_\CC\Gamma\left(X,\tilde Q(d+\deg x^{\lfloor\delta\rfloor})\right) &&& \oh(\deg x^{\lfloor-\gamma\rfloor})\otimes_\CC\Gamma\left(X,\tilde Q(d+\deg x^{\lfloor\gamma\rfloor})\right)\ar[lll]_{\pm x^{\lceil\gamma\rceil-\lceil\delta\rceil}\otimes x^{\lfloor\delta\rfloor-\lfloor\gamma\rfloor}}
}\]
on the second page.  From the perspective of commutative algebra we are interested in the complex of free $S$-modules associated to the Fourier--Mukai transform of $Q$, which can be obtained by applying the functor $\Gamma_*=\bigoplus_{d'\in\Pic X}\Gamma\left(X,-(d')\right)$ to this row.

\begin{cons}\label{cons:phi}
Denote $\Gamma_*$ of the Fourier--Mukai transform of $Q(d)$ with $\tilde G_\bullet$ by $\Phi(Q(d))$.
\end{cons}

This paper focuses on computing $\Phi(S(d))$, in which case $\tilde Q=\tilde S=\oh$ and $\Gamma\left(X,\oh(d')\right)$ is isomorphic to the degree $d'$ part of $S$, giving the module map
\begin{align}\label{eq:FM}
\xymatrix{
	S(\deg x^{\lfloor-\delta\rfloor})\otimes_\CC S_{d+\deg x^{\lfloor\delta\rfloor}} &&& S(\deg x^{\lfloor-\gamma\rfloor})\otimes_\CC S_{d+\deg x^{\lfloor\gamma\rfloor}}\ar[lll]_{\pm x^{\lceil\gamma\rceil-\lceil\delta\rceil}\otimes x^{\lfloor\delta\rfloor-\lfloor\gamma\rfloor}}
}
\end{align}
whose second coordinate is ordinary multiplication.  Assuming that $d$ is nef is sufficient for this to be the only row in the spectral sequence of the Fourier--Mukai transform by the following lemma.

We can always choose $\gamma\in\QQ^r$ so we will restrict ourselves to $\QQ$-divisors.  Use $\deg$ to represent both the standard degree map and the induced map on monomials with rational exponents, since $\Pic X$ has no torsion as $X$ is smooth.

\begin{lem}\label{lem:vanishing}
	If $d$ is nef and $e$ is of the form $\deg x^{\lfloor\gamma\rfloor}$ for some $\gamma$ in the image of $M\otimes\QQ$ then $H^i\left(X,\oh(d+e)\right)=0$ for $i>0$.
\end{lem}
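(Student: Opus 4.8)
The plan is to show that $d+e$ is itself nef, since nef line bundles on a smooth projective toric variety have no higher cohomology (see \cite[Theorem~9.2.3 or Theorem~9.1.3]{CLS11}). The issue is that $e = \deg x^{\lfloor\gamma\rfloor}$ need not be effective, let alone nef, so we cannot simply add nef classes. Instead I would exploit the precise form of $e$: the exponent vector $\lfloor\gamma\rfloor$ differs from the vector $\gamma$ (which lies in the image of $M\otimes\QQ$ in $\RR^r$, hence maps to $0$ in $\Pic X$) by the fractional parts, so $\deg x^{\lfloor\gamma\rfloor} = \deg x^{\lfloor\gamma\rfloor} - \deg x^{\gamma} = \deg x^{\lfloor\gamma\rfloor - \gamma} = -\deg x^{\{\gamma\}}$, where $\{\gamma\} = \gamma - \lfloor\gamma\rfloor \in [0,1)^r$. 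Thus $e$ is the negative of the degree of an \emph{effective} $\QQ$-divisor all of whose coefficients lie in $[0,1)$.

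Next I would reduce to a cohomology vanishing statement via the combinatorial description of toric cohomology. Writing $D_i$ for the torus-invariant prime divisor corresponding to the ray $u_i$, the divisor $D_e := -\sum_i \{\gamma\}_i D_i$ represents $e$, so $d + e$ is represented by $D_d + D_e$ where $D_d = \sum_i a_i D_i$ is a nef representative of $d$ with $a_i \ge 0$. Rounding up, $\lceil D_d + D_e\rceil$ is a torus-invariant \emph{integral} divisor in the class $d + \lceil e\rceil$... but this changes the class, so that is the wrong move. The correct approach is to work directly with the class $d+e$: I would show $H^i(X, \oh(d+e)) = 0$ by checking that the polytope-theoretic or fan-theoretic criterion for higher cohomology vanishing applies. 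Concretely, use the fact from \cite[p.~66]{Ful93} or \cite[\S 9.1]{CLS11} that for the nef divisor $D_d$, the polytope $P_{D_d} = \{m \in M_\RR : \langle m, u_i\rangle \ge -a_i\}$ captures the global sections, and more generally $H^i(X,\oh(D))$ for any $\QQ$-Cartier $D = \sum c_i D_i$ vanishes for $i>0$ whenever $D$ is nef. So the real content is: \emph{$D_d + D_e$ is nef}. Since $D_d$ is nef and $-D_e = \sum \{\gamma\}_i D_i$ is an effective divisor with all coefficients strictly less than $1$, I would argue that subtracting such a "small" effective divisor from a nef divisor preserves nefness on a \emph{smooth} toric variety: on each maximal cone $\sigma$ with primitive generators $u_{i_1},\dots,u_{i_n}$ forming a $\ZZ$-basis of $N$, the local Cartier data $m_\sigma$ for $D_d$ satisfies $\langle m_\sigma, u_i\rangle \ge -a_i$ for all $i$, and the perturbation by $D_e$ shifts the requirement to $\langle m'_\sigma, u_i \rangle \ge -a_i + \{\gamma\}_i$; one checks the support function of $D_d+D_e$ is still convex because the fractional shifts $\{\gamma\}_i$ come from a \emph{single} global vector $\gamma \in M_\RR$ (up to the integral translation built into $\lfloor\cdot\rfloor$), which is exactly the statement that $D_e$ is principal as a $\QQ$-divisor, $D_e = \operatorname{div}(x^{-\{\gamma\}})$ in the $\QQ$-sense — hence $D_d + D_e \sim_\QQ D_d$ is nef as a numerical class.

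Wait — that last observation trivializes things: since $e = 0$ in $\Pic X \otimes \QQ$ (as $\gamma$ lies in the image of $M\otimes\QQ$), we have $d + e = d$ in $\Pic X$, so $\oh(d+e) \cong \oh(d)$ as line bundles and the lemma is immediate from nef cohomology vanishing for $\oh(d)$ itself. The only subtlety to address carefully is that $e := \deg x^{\lfloor\gamma\rfloor}$ is an \emph{integral} class, so I should confirm $e = 0$ in $\Pic X$ exactly (not just rationally): indeed $\lfloor\gamma\rfloor \in \ZZ^r$ and $\gamma \in \operatorname{image}(M\otimes\QQ)$ means $\gamma \mapsto 0$ in $\Pic X \otimes \QQ$, while $\gamma - \lfloor\gamma\rfloor = \{\gamma\} \in [0,1)^r$ also maps to $0$ in $\Pic X \otimes \QQ$ only if $\{\gamma\}$ itself comes from $M\otimes\QQ$ — which it does, being $\gamma - \lfloor\gamma\rfloor$ with $\gamma \in \operatorname{im}(M\otimes\QQ)$ and $\lfloor\gamma\rfloor$ mapping into the image of $M$ in the relevant quotient? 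That is the one place requiring care. The cleanest formulation: $\deg x^{\lfloor\gamma\rfloor} = \deg x^\gamma - \deg x^{\{\gamma\}} = -\deg x^{\{\gamma\}}$ in $\Pic X \otimes \QQ$ since $\deg x^\gamma = 0$; but $\deg x^{\lfloor\gamma\rfloor} \in \Pic X$ is integral and torsion-free, and $-\deg x^{\{\gamma\}}$ being its image in $\Pic X \otimes \QQ$ forces $\deg x^{\{\gamma\}} \in \Pic X$ too, whence both equal a genuine divisor class $e$ with $e = 0$ rationally, hence $e = 0$ in the torsion-free group $\Pic X$. Therefore $\oh(d+e) = \oh(d)$ and $H^i(X,\oh(d)) = 0$ for $i>0$ by nefness. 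I expect this final bookkeeping — pinning down that $e$ is honestly zero in $\Pic X$, using only that $\gamma$ lies in the image of $M\otimes\QQ$ and that $\Pic X$ is torsion-free — to be the only real obstacle; everything else is the standard toric vanishing theorem.
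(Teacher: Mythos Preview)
Your final argument is incorrect: the class $e=\deg x^{\lfloor\gamma\rfloor}$ is \emph{not} zero in $\Pic X$ in general. The slip is the assertion that $\lfloor\gamma\rfloor$ lies in the image of $M$ (equivalently, that $\{\gamma\}$ lies in the image of $M\otimes\QQ$); there is no reason for this. Already on $X=\PP^1$, with $M\to\ZZ^2$ given by $m\mapsto(m,-m)$ and $\deg(a,b)=a+b$, the choice $\gamma=(\tfrac12,-\tfrac12)$ gives $\lfloor\gamma\rfloor=(0,-1)$ and $e=-1$. In the paper's own example $X=\mathbb F_2$, the vector $\gamma=(0,\tfrac12,1,-\tfrac12)$ lies in the image of $M\otimes\QQ$ and yields $e=(1,-1)$; indeed, nonzero values of $e$ are exactly what produce the nontrivial twists $S(\deg x^{\lfloor\gamma\rfloor})$ appearing in $\Phi(S(d))$. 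Your earlier hope that $d+e$ is at least nef also fails: with $d=0$ and $e=(1,-1)$ on $\mathbb F_2$, the class $d+e=(1,-1)$ lies outside the nef cone (it pairs negatively with the $(-2)$-curve), even though $H^i\big(X,\oh(1,-1)\big)=0$ for $i>0$.

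The idea you abandoned midway is the one that works, and it is what the paper does. Choose $\alpha\in\ZZ^r$ with $\deg x^\alpha=d$; then $\alpha+\gamma$ is a $\QQ$-divisor whose numerical class is $d$, hence nef, and $\lfloor\alpha+\gamma\rfloor=\alpha+\lfloor\gamma\rfloor$ has class exactly $d+e$. Now invoke the toric vanishing theorem for the round-down of a nef $\QQ$-divisor \cite[Theorem~9.3.5]{CLS11}, which gives $H^i\big(X,\oh(\lfloor\alpha+\gamma\rfloor)\big)=0$ for $i>0$ directly. No nefness of $d+e$ is needed or claimed.
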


\begin{proof}
	Choose $\alpha$ so that $\deg x^\alpha=d$, as in Section~\ref{sec:cellular}.
	Then $\deg x^{\alpha+\gamma}=d+0$ because $\gamma\in M\otimes\QQ$.  Thus by \cite[Theorem~9.3.5]{CLS11} we have $H^i\left(X,\oh(\deg x^{\lfloor\alpha+\gamma\rfloor})\right)=0$ for $i>0$, where $\deg x^{\lfloor\alpha+\gamma\rfloor}=\deg x^\alpha+\deg x^{\lfloor\gamma\rfloor}=d+e$ as desired.
\end{proof}

\section{Example}\label{sec:example}

Let $X$ be the Hirzebruch surface $\mathbb F_2$.  For simplicity we use the notation $S=\CC[x,y,z,w]$.  The fan for $X$ and the degrees of the variables in $S$ are illustrated in Figure~\ref{fig:fan}.

\begin{figure}[h]
  \begin{tikzpicture}[scale=.5]
    \path[use as bounding box] (-3,-3) rectangle (3,3);

    \fill[fill=blue!10] (0,0) -- (3, 0) -- ( 3,  3) -- ( 0, 3) -- cycle;
    \fill[fill=blue!25] (0,0) -- (0, 3) -- (-3/2,3) -- ( 0, 0);
    \fill[fill=blue!40] (0,0) -- (0,-3) -- (-3, -3) -- (-3, 3) -- (-3/2,3) -- (0,0);
    \fill[fill=blue!55] (0,0) -- (3, 0) -- ( 3, -3) -- ( 0,-3) -- cycle;

    \draw[line width=1pt,black] (0,0) -- (-3/2,3);
    \draw[line width=1pt,black] (0,0) -- ( 0, -3);
    \draw[line width=1pt,black] (0,0) -- ( 3,  0);
    \draw[line width=1pt,black] (0,0) -- ( 0,  3);

    \draw[line width=1.5pt,black,-stealth] (0,0) -- ( 1, 0) node[anchor=north west]{$\rho_0$};
    \draw[line width=1.5pt,black,-stealth] (0,0) -- ( 0,-1) node[anchor=south east]{$\rho_3$};
    \draw[line width=1.5pt,black,-stealth] (0,0) -- (-1, 2) node[anchor=north east]{$\rho_2$};
    \draw[line width=1.5pt,black,-stealth] (0,0) -- ( 0, 1) node[anchor=south west]{$\rho_1$};
  \end{tikzpicture}
  \hspace{1in}
  \begin{tikzpicture}[scale=.5]
    \path[use as bounding box] (-3,-2) rectangle (3,3);

    \fill[fill=blue!20] (0,0) -- (3,0) -- (3,3) -- (-3,3) -- (-3,3/2) -- cycle;
    \fill[fill=blue!35] (0,0) -- (3,0) -- (3,3) -- (0,3) -- cycle;

    \draw [thin, gray,] (-3,0) -- (3,0); 
    \draw [thin, gray,] (0,-1) -- (0,3); 

    \draw[line width=1.5pt,black,-stealth] (0,0) -- ( 1,0) node[anchor=south west]{$x,z$};
    \draw[line width=1.5pt,black,-stealth] (0,0) -- (-2,1) node[anchor=south west]{$y$};
    \draw[line width=1.5pt,black,-stealth] (0,0) -- ( 0,1) node[anchor=south west]{$w$};
  \end{tikzpicture}
  \caption{the fan of $X$ and the degrees of the variables of $S$}
  \label{fig:fan}
\end{figure}
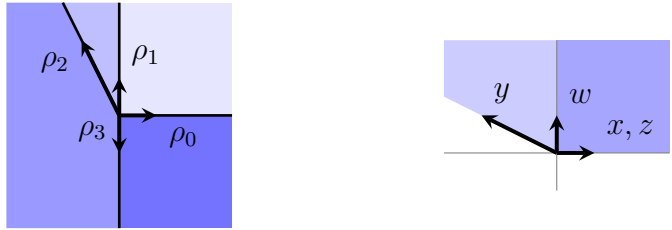

\noindent They form the presentation for $\Pic X\simeq\ZZ^2$ as follows:

\begin{align}\label{eq:SES}
\xymatrix{
	0\ar[r] & M\ar[rr]^{\begin{bmatrix}
		1 & 0\\
		0 & 1\\
		-1 & 2\\
		0 & -1
	\end{bmatrix}} && \ZZ^4\ar[rrr]^-{\begin{bmatrix}
		1 & -2 & 1 & 0\\
		0 & 1 & 0 & 1
	\end{bmatrix}} &&& \Pic X\ar[r] & 0
}
\end{align}

Fix $d=(1,1)$ and $\alpha=(0,0,1,1)$, corresponding to the monomial $zw$.  All monomials of degree $d$ fit into the trapezoid in Figure~\ref{fig:polytope}, defined by the hyperplanes where $\langle-,u_\rho\rangle=\alpha_\rho$.

\begin{figure}[h]
  \begin{tikzpicture}
    \fill[fill=blue!20] (0,0) -- (1,0) -- (3,1) -- (0,1) -- (0,0);
    
    \draw[line width=1pt,black] (3,1) -- (0,1) node[circle,fill,inner sep=2pt]{} node[above]{$yz^3$} -- (0,0) node[circle,fill,inner sep=2pt]{} node[below] {$zw$} -- (1,0) node[circle,fill,inner sep=2pt]{} node[below]{$xw$} -- (3,1) node[circle,fill,inner sep=2pt]{} node[above]{$x^3y$}; 
    \draw[black] (1,1) node[circle,fill,inner sep=2pt]{} node[above]{$xyz^2$};
    \draw[black] (2,1) node[circle,fill,inner sep=2pt]{} node[above]{$x^2yz$};
  \end{tikzpicture}
  \caption{the polytope of degree $d$ monomials in $S$}
  \label{fig:polytope}
\end{figure}
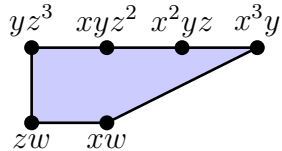

The restrictions of the coordinate planes in $\RR^4$ are then given by the hyperplanes where $\langle-,u_\rho\rangle$ is integer valued.  The labeled cell complex $D$ is pictured in Figure~\ref{fig:D}.  Recall that the plane containing $D$ is a coset of the image of $M\otimes\RR$ in $\RR^4$, with inclusion induced by the map $M\to\ZZ^4$ in the short exact sequence \eqref{eq:SES}.  Thus we can use $\alpha$ and the columns of the matrix to compute exponents for labels.  For example $\alpha+\frac{1}{2}(0,1,2,-1)=\left(0,\frac{1}{2},2,\frac{1}{2}\right)$ lies between $\alpha$ and $\alpha+(0,1,2,-1)$, and should be labeled $yz^2w$ from the ceiling $(0,1,2,1)$.

\begin{figure}[h]
  \begin{tikzpicture}
    \fill[fill=blue!20] (0,0) -- (2,0) -- (6,2) -- (0,2) -- (0,0);
    
    \draw[line width=1pt,black] (6,2) -- (0,2) node[above]{$yz^3$} -- (0,0) -- (2,0) -- (6,2) node[above]{$x^3y$};
    \draw[line width=1pt,black] (0,1) -- (2,2);
    \draw[line width=1pt,black] (0,0) node[below]{$zw$} -- (4,2);
    \draw[line width=1pt,black] (2,0) node[below]{$xw$} -- (2,2) node[above]{$xyz^2$};
    \draw[line width=1pt,black] (4,1) -- (4,2) node[above]{$x^2yz$};
	\draw[black] (0,1) node[left]{$yz^2w$};
	\draw[black] (2,1) node[below right]{$xyzw$};
	\draw[black] (4,1) node[below right]{$x^2yw$};
  \end{tikzpicture}
  \caption{the labeled CW complex $D$}
  \label{fig:D}
\end{figure}
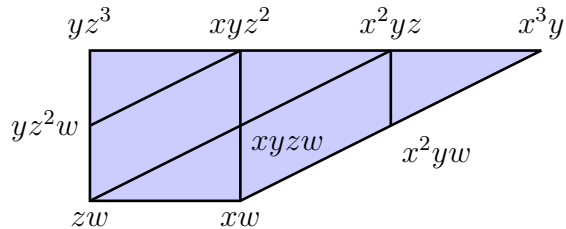

\begin{rem}
	In this example $S_d$ generates all of $\trunc_d(S)$, which is not true in general, yet unfortunately common to examples of a reasonable size for hand computation.
\end{rem}

We construct the complex $F_\bullet$ from $D$ (and an incidence function chosen by \emph{Macaulay2}).  By Lemma~\ref{lem:lcms} the remaining labels are the least common multiples of those on the vertices.  Refer to Figure~\ref{fig:table} for the correspondence between edges and summands in the middle column.

\[\xymatrix@R=0pt{
	S(-1,-1)^6 & S(-2,-1)^4 & S(0,-2)\\
	\oplus S(0,-2)^3 &\ar[l]_-{\partial_1}\oplus S(0,-2)^4 &\ar[l]_-{\partial_2}\oplus S(-1,-2)^2 &\ar[l] 0\\
	& \oplus S(-1,-2)^6 & \oplus S(-2,-2)^3\\
}\]
\[\partial_1 =
\scriptsize{\begin{bmatrix}
	-z &  0 &  0 &  0 &  0 &  0 &  0 &  0 &  w &  0 &  0 &  0 &  0 &  0\\
	x & -z &  0 &  0 &  0 &  0 &  0 &  0 &  0 &  w &  0 &  w &  0 &  0\\
	0 &  x & -z &  0 &  0 &  0 &  0 &  0 &  0 &  0 &  w &  0 &  w &  0\\
	0 &  0 &  0 & -z & xy &  0 & yz &  0 &  0 &  0 &  0 &  0 &  0 &  0\\
	0 &  0 &  x &  0 &  0 &  0 &  0 &  0 &  0 &  0 &  0 &  0 &  0 &  w\\
	0 &  0 &  0 &  x &  0 & xy &  0 & yz &  0 &  0 &  0 &  0 &  0 &  0\\
	0 &  0 &  0 &  0 & -1 &  0 &  0 &  0 & -x &  0 &  0 & -z &  0 &  0\\
	0 &  0 &  0 &  0 &  0 & -1 & -1 &  0 &  0 & -x &  0 &  0 & -z &  0\\
	0 &  0 &  0 &  0 &  0 &  0 &  0 & -1 &  0 &  0 & -x &  0 &  0 & -z
\end{bmatrix}},\
\partial_2 =
\scriptsize{\begin{bmatrix}
	0 &  0 &  0 & -w &  0 &  0\\
	0 &  0 &  0 &  0 & -w &  0\\
	0 &  0 &  0 &  0 &  0 & -w\\
	-y &  0 &  0 &  0 &  0 &  0\\
	0 & -z &  0 &  0 &  0 &  0\\
	1 &  0 & -z &  0 &  0 &  0\\
	-1 &  x &  0 &  0 &  0 &  0\\
	0 &  0 &  x &  0 &  0 &  0\\
	0 &  0 &  0 & -z &  0 &  0\\
	0 & -1 &  0 &  0 & -z &  0\\
	0 &  0 & -1 &  0 &  0 & -z\\
	0 &  1 &  0 &  x &  0 &  0\\
	0 &  0 &  1 &  0 &  x &  0\\
	0 &  0 &  0 &  0 &  0 &  x
\end{bmatrix}}\]

The cell complex $E$ is pictured in Figure~\ref{fig:E}, along with a portion of its labeled covering space in $M\otimes\RR$.  The underlying space of $E$ itself is a torus, where opposing edges of the fundamental domain are identified according to the arrows.  Again we compute labels from the map $M\to\ZZ^4$, this time changing signs in the second coordinate before taking ceilings.

\begin{figure}[h]
  \begin{tikzpicture}
	\fill[fill=blue!20] (0,0) -- (2,0) -- (2,2) -- (0,2) -- (0,0);
  	
  	\draw[->,line width=1pt,black] (0,0) -- (0,.5);
  	\draw[->,line width=1pt,black] (2,0) -- (2,.5);
  	\draw[->,line width=1pt,black] (0,1) -- (0,1.5);
  	\draw[->,line width=1pt,black] (2,1) -- (2,1.5);
  	\draw[->,line width=1pt,black] (0,0) -- (1,0);
  	\draw[->,line width=1pt,black] (0,2) -- (1,2);
  	
  	\draw[line width=1pt,black] (0,.5) -- (0,1);
  	\draw[line width=1pt,black] (2,.5) -- (2,1);
  	\draw[line width=1pt,black] (0,1.5) -- (0,2);
  	\draw[line width=1pt,black] (2,1.5) -- (2,2);
  	\draw[line width=1pt,black] (1,0) -- (2,0);
  	\draw[line width=1pt,black] (1,2) -- (2,2);
  	
  	\draw[line width=1pt,black] (0,1) -- (2,2);
  	\draw[line width=1pt,black] (0,0) -- (2,1);
  \end{tikzpicture}
  \begin{tikzpicture}  	
  	\fill[fill=blue!20] (0,0) -- (2,0) -- (2,2) -- (0,2) -- (0,0);
  	
  	\draw[line width=1pt,black] (2,2) -- (0,2) node[above]{$\frac{yz^2}{w}\otimes\frac{w}{yz^2}$} -- (0,0) -- (2,0) node[below]{$\frac{x}{z}\otimes\frac{z}{x}$} -- (2,2) node[above]{$\frac{xyz}{w}\otimes\frac{w}{xyz}$};
  	\draw[line width=1pt,black] (0,1) -- (2,2);
  	\draw[line width=1pt,black] (0,0) node[below]{$1\otimes 1$} -- (2,1);
  	\draw[black] (0,1) node[left]{$yz\otimes\frac{w}{z}$};
  	\draw[black] (2,1) node[right]{$x\otimes\frac{w}{x}$};
  \end{tikzpicture}
  \caption{the CW complex $E$ and its labeled covering space}
  \label{fig:E}
\end{figure}
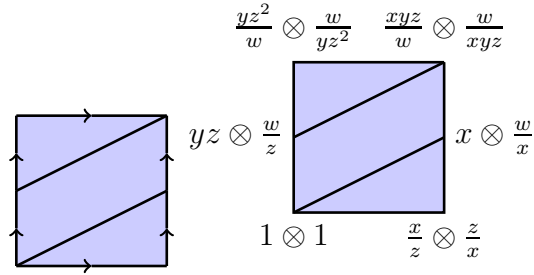

The terms of $G_\bullet$ correspond to the cells in $E$, with twist given by the label of any preimage.  To calculate each map we identify all adjacent preimages in the covering space, which may span multiple fundamental domains.

\[\xymatrix@R=0pt{
	R(0,0,0,0) & R(-1,0,-1,0) & R(1,-1,-1,-1)\\
	\oplus R(1,-1,1,-1) &\ar[l]_-{\partial_1} \oplus R(1,-1,0,-1)^2 &\ar[l]_-{\partial_2} \oplus R(0,-1,0,-1) &\ar[l] 0\\
	& \oplus R(0,-1,1,-1)^2 & \oplus R(-1,-1,1,-1)
}\]
\[\partial_1=\scriptsize{\begin{bmatrix}
		x\otimes z-z\otimes x & xy\otimes w & yz\otimes w & w\otimes xy & w\otimes yz\\
		0 & -1\otimes x & -1\otimes z & -x\otimes 1 & -z\otimes 1
\end{bmatrix}},\
\partial_2=\scriptsize{\begin{bmatrix}
		-y\otimes w & 0 & -w\otimes y\\
		1\otimes z & -z\otimes 1 & 0\\
		-1\otimes x & x\otimes 1 & 0\\
		0 & -1\otimes z & -z\otimes 1\\
		0 & 1\otimes x & x\otimes 1
\end{bmatrix}}\]

For instance, in the first entry of the first matrix above the horizontal edge in Figure~\ref{fig:E} maps to the same vertex in $E$ on both sides.  For one we divide the lcm label $x\otimes z$ by $1\otimes 1$ and for the other we divide $x\otimes z$ by $\frac{x}{z}\otimes\frac{z}{x}$.  We could also have used the equivalent labels from the top of Figure~\ref{fig:E}:
\[\frac{\frac{xyz^2}{w}\otimes\frac{w}{yz}}{\frac{yz^2}{w}\otimes\frac{w}{yz^2}}-\frac{\frac{xyz^2}{w}\otimes\frac{w}{yz}}{\frac{xyz}{w}\otimes\frac{w}{xyz}}=x\otimes z-z\otimes x\]
Our incidence function agrees with the one above when lifted to the covering space of $E$.

Finally we calculate the Fourier--Mukai transform $\Phi(S(d))$ using the formulas from Section~\ref{sec:derived}.  Recall that maps are given by the same matrices as in $G_\bullet$.

\[\xymatrix@R=0pt{
	 \left[S(0,0)\otimes S_{(1,1)}\right] & \left[S(-1,0)\otimes S_{(0,1)}\right] & \left[S(1,-1)\otimes S_{(0,0)}\right]\\
	\oplus\left[S(1,-1)\otimes S_{(2,0)}\right] &\ar[l] \oplus\left[S(1,-1)^2\otimes S_{(1,0)}\right] &\ar[l] \oplus\left[S(0,-1)\otimes S_{(1,0)}\right] \\
	& \oplus\left[S(0,-1)^2\otimes S_{(2,0)}\right] & \oplus\left[S(-1,-1)\otimes S_{(2,0)}\right]
}\]

We can see a correspondence of terms visually by comparing the placement of the cells of a given type in $D$ with the lattice points in the polytopes of degrees of $S$ appearing in $\Phi(S(d))$, as shown in Figure~\ref{fig:table} for the middle terms.  Lemma~\ref{lem:terms} will make this precise.

\begin{figure}[h]
  \begin{tabular}{|c|c|c|c|c|}\hline
  	edges in $D$ & $F_1$ & $G_1$ & $\Phi(S(d))_1$ & polytope in $S$\\\hline
	\begin{tikzpicture}[scale=0.5]
		\path[rectangle] (-.5,0) -- (6.5,2.5);
		\fill[fill=gray!20] (0,0) -- (2,0) -- (6,2) -- (0,2) -- (0,0);

		\draw[line width=1pt,gray] (0,1) -- (0,0) -- (2,0) -- (6,2) -- (0,2);
		\draw[line width=1pt,gray] (0,1) -- (2,2);
		\draw[line width=1pt,gray] (0,0) -- (4,2);
		\draw[line width=1pt,gray] (2,0) -- (2,1);

		\draw[line width=3pt,blue] (0,1) -- (0,2);
		\draw[line width=3pt,blue] (2,1) -- (2,2);
		\draw[line width=3pt,blue] (4,1) -- (4,2);
	\end{tikzpicture} & $S(-1,-2)^3$ & $R(0,-1,1,-1)$ & $S(0,-1)\otimes S_{(2,0)}$ &
	\begin{tikzpicture}
		\draw[line width=1pt,black] (0,0) node[circle,fill,inner sep=2pt]{} node[above] {$z^2$} -- (1,0) node[circle,fill,inner sep=2pt]{} node[above]{$xz$} -- (2,0) node[circle,fill,inner sep=2pt]{} node[above]{$x^2$};
	\end{tikzpicture}
	\\\hline
	\begin{tikzpicture}[scale=0.5]
		\path[rectangle] (-.5,0) -- (6.5,2.5);
		\fill[fill=gray!20] (0,0) -- (2,0) -- (6,2) -- (0,2) -- (0,0);

		\draw[line width=1pt,gray] (6,2) -- (0,2) -- (0,0) -- (2,0) -- (4,1);
		\draw[line width=1pt,gray] (0,0) -- (2,1);
		\draw[line width=1pt,gray] (2,0) -- (2,2);
		\draw[line width=1pt,gray] (4,1) -- (4,2);

		\draw[line width=3pt,blue] (0,1) -- (2,2);
		\draw[line width=3pt,blue] (2,1) -- (4,2);
		\draw[line width=3pt,blue] (4,1) -- (6,2);
	\end{tikzpicture} & $S(-1,-2)^3$ & $R(0,-1,1,-1)$ & $S(0,-1)\otimes S_{(2,0)}$ &
	\begin{tikzpicture}
		\draw[line width=1pt,black] (0,0) node[circle,fill,inner sep=2pt]{} node[above] {$z^2$} -- (1,0) node[circle,fill,inner sep=2pt]{} node[above]{$xz$} -- (2,0) node[circle,fill,inner sep=2pt]{} node[above]{$x^2$};
	\end{tikzpicture}
	\\\hline
	\begin{tikzpicture}[scale=0.5]
		\path[rectangle] (-.5,0) -- (6.5,2.5);
		\fill[fill=gray!20] (0,0) -- (2,0) -- (6,2) -- (0,2) -- (0,0);

		\draw[line width=1pt,gray] (2,0) -- (6,2);
		\draw[line width=1pt,gray] (0,0) -- (0,2);
		\draw[line width=1pt,gray] (0,1) -- (2,2);
		\draw[line width=1pt,gray] (0,0) -- (4,2);
		\draw[line width=1pt,gray] (2,0) -- (2,2);
		\draw[line width=1pt,gray] (4,1) -- (4,2);

		\draw[line width=3pt,blue] (0,0) -- (2,0);
		\draw[line width=3pt,blue] (0,2) -- (6,2);
	\end{tikzpicture} & $S(-2,-1)^4$ & $R(-1,0,-1,0)$ & $S(-1,0)\otimes S_{(0,1)}$ &
	\begin{tikzpicture}
		\fill[fill=blue!20] (0,0) -- (2,1) -- (0,1) -- (0,0);

		\draw[line width=1pt,black] (2,1) -- (0,1) node[circle,fill,inner sep=2pt]{} node[above]{$yz^2$} -- (0,0) node[circle,fill,inner sep=2pt]{} node[below] {$w$} -- (2,1) node[circle,fill,inner sep=2pt]{} node[above]{$x^2y$};
		\draw[black] (1,1) node[circle,fill,inner sep=2pt]{} node[above]{$xyz$};
	\end{tikzpicture}
	\\\hline
	\begin{tikzpicture}[scale=0.5]
		\path[rectangle] (-.5,0) -- (6.5,2.5);
		\fill[fill=gray!20] (0,0) -- (2,0) -- (6,2) -- (0,2) -- (0,0);

		\draw[line width=1pt,gray] (0,0) -- (2,0) -- (6,2) -- (0,2) -- (0,1);
		\draw[line width=1pt,gray] (0,1) -- (2,2);
		\draw[line width=1pt,gray] (0,0) -- (4,2);
		\draw[line width=1pt,gray] (2,1) -- (2,2);
		\draw[line width=1pt,gray] (4,1) -- (4,2);

		\draw[line width=3pt,blue] (0,0) -- (0,1);
		\draw[line width=3pt,blue] (2,0) -- (2,1);
	\end{tikzpicture} & $S(0,-2)^2$ & $R(1,-1,0,-1)$ & $S(1,-1)\otimes S_{(1,0)}$ &
	\begin{tikzpicture}
		\draw[line width=1pt,black] (0,0) node[circle,fill,inner sep=2pt]{} node[above] {$z$} -- (1,0) node[circle,fill,inner sep=2pt]{} node[above]{$x$};
	\end{tikzpicture}
	\\\hline
	\begin{tikzpicture}[scale=0.5]
		\path[rectangle] (-.5,0) -- (6.5,2.5);
		\fill[fill=gray!20] (0,0) -- (2,0) -- (6,2) -- (0,2) -- (0,0);

		\draw[line width=1pt,gray] (4,1) -- (6,2) -- (0,2) -- (0,0) -- (2,0);
		\draw[line width=1pt,gray] (0,1) -- (2,2);
		\draw[line width=1pt,gray] (2,1) -- (4,2);
		\draw[line width=1pt,gray] (2,0) -- (2,2);
		\draw[line width=1pt,gray] (4,1) -- (4,2);

		\draw[line width=3pt,blue] (0,0) -- (2,1);
		\draw[line width=3pt,blue] (2,0) -- (4,1);
	\end{tikzpicture} & $S(0,-2)^2$ & $R(1,-1,0,-1)$ & $S(1,-1)\otimes S_{(1,0)}$ &
	\begin{tikzpicture}
		\draw[line width=1pt,black] (0,0) node[circle,fill,inner sep=2pt]{} node[above] {$z$} -- (1,0) node[circle,fill,inner sep=2pt]{} node[above]{$x$};
	\end{tikzpicture}
	\\\hline
  \end{tabular}
  \caption{the correspondence of summands between $F_1$ and $G_1$}
  \label{fig:table}
\end{figure}

\section{Cellular description of transform for \texorpdfstring{$S(d)$}{S(d)}}

\begin{thm}\label{thm:FM-of-S}
	For $d$ nef the twist $F(d)_\bullet$ of the cellular resolution $F_\bullet$ on the labeled cell complex $D$ from Construction~\ref{cons:F} is isomorphic to the Fourier--Mukai transform $\Phi(S(d))$ of $S(d)$ with the resolution of the diagonal $G_\bullet$ of Hanlon, Hicks, and Lazarev, as described in Construction~\ref{cons:phi}, using incidence functions from the same $\varepsilon$ on $C$.
\end{thm}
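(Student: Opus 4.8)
The plan is to write down an explicit isomorphism of complexes of free graded $S$-modules, first matching summands bijectively and then checking that the differentials agree summand by summand. Write $A=\alpha\otimes 1+M\otimes\RR\subseteq\RR^r$, let $D^+$ be the (infinite) restriction of $C$ to $A$, so that $D$ is the subcomplex of $D^+$ lying in $\RR^r_{\geq 0}$. Since $\alpha\in\ZZ^r$ preserves $C$, translation by $-\alpha$ carries $C|_A$ onto $C|_{M\otimes\RR}$ compatibly with the $M$-action and so identifies $E$ with the quotient $D^+/M$; under this identification I use the $M$-invariant $\varepsilon$ on $C$ both for $D^+$ (hence $D$) and, by descent, for $E$, as the statement requires. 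For a cell $\sigma$ of $D^+$ fix a point $p_\sigma=\alpha+\gamma_\sigma$ in its relative interior, so that its label in $F_\bullet$ is $x^{\lceil p_\sigma\rceil}=x^{\alpha+\lceil\gamma_\sigma\rceil}$.

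For the termwise bijection, note that after twisting by $d$ the summand of $F(d)_\bullet$ at a cell $\sigma'\subseteq D$ is $S(-\deg x^{\lceil\gamma_{\sigma'}\rceil})$ (using $\deg x^\alpha=d$), while the summand of $\Phi(S(d))_\bullet$ at a cell $\bar\sigma\subseteq E$ is, by \eqref{eq:FM} and $-\lceil t\rceil=\lfloor -t\rfloor$, a direct sum of copies of the same module $S(-\deg x^{\lceil\gamma_\sigma\rceil})$ indexed by the monomials of $S_{d+\deg x^{\lfloor\gamma_\sigma\rfloor}}$ for any lift $\sigma$ — both degrees being independent of the representative since $M\to\Pic X$ is zero. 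The combinatorial heart of the matching is that $\sigma'\mapsto x^{\lfloor p_{\sigma'}\rfloor}$ is a bijection from the cells $\sigma'$ of $D$ in the $M$-orbit of $\sigma$ to the monomials of $S_{d+\deg x^{\lfloor\gamma_\sigma\rfloor}}$: membership of $\sigma'$ in $D$ is the condition $p_{\sigma'}\geq 0$, equivalently $\lfloor p_{\sigma'}\rfloor\geq 0$, and $\deg x^{\lfloor p_{\sigma'}\rfloor}=d+\deg x^{\lfloor\gamma_{\sigma'}\rfloor}=d+\deg x^{\lfloor\gamma_\sigma\rfloor}$. This produces a homological- and internal-degree-preserving isomorphism of underlying modules sending the generator $e_{\sigma'}$ to $e_{\overline{\sigma'}}\otimes x^{\lfloor p_{\sigma'}\rfloor}$.

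To compare differentials, recall that in $F_\bullet$ the component of $\partial$ from $e_{\sigma'}$ to a facet $\sigma''$ of $\sigma'$ is $\varepsilon(\sigma',\sigma'')\,x^{\lceil p_{\sigma'}\rceil-\lceil p_{\sigma''}\rceil}$ (and $\sigma''$ is again a cell of $D$, since $D$ is a subcomplex); an elementary box computation gives $\lceil p_{\sigma''}\rceil\leq\lceil p_{\sigma'}\rceil$ and $\lfloor p_{\sigma''}\rfloor\geq\lfloor p_{\sigma'}\rfloor$, so all the monomials appearing are genuine. On the other side, because $E=D^+/M$ and $\varepsilon$ descends, the differential of $G_\bullet$ is obtained by fixing any lift $\sigma$ of $\bar\sigma$ and summing \eqref{eq:G} over the facets $\sigma''$ of $\sigma$ in $D^+$ — this is exactly where the ``multiple maps between the same two cells'' come from — and the result does not depend on the lift. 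The Fourier--Mukai transform for $S(d)$ replaces the second tensor factor of \eqref{eq:G} by multiplication by the monomial $x^{\lfloor\gamma_{\sigma''}\rfloor-\lfloor\gamma_\sigma\rfloor}$, as in \eqref{eq:FM}. Given a summand $e_{\bar\sigma}\otimes x^\beta$ of $\Phi(S(d))_\bullet$, I would choose the lift $\sigma$ to be the cell of $D$ with $\lfloor p_\sigma\rfloor=\beta$ furnished by the bijection; then the $\sigma''$-term sends $e_{\bar\sigma}\otimes x^\beta$ to $\pm\,x^{\lceil p_\sigma\rceil-\lceil p_{\sigma''}\rceil}\,\bigl(e_{\overline{\sigma''}}\otimes x^{\beta+\lfloor\gamma_{\sigma''}\rfloor-\lfloor\gamma_\sigma\rfloor}\bigr)$, and since $\beta+\lfloor\gamma_{\sigma''}\rfloor-\lfloor\gamma_\sigma\rfloor=\alpha+\lfloor\gamma_{\sigma''}\rfloor=\lfloor p_{\sigma''}\rfloor$, this is precisely the image under the bijection of $\varepsilon(\sigma,\sigma'')\,x^{\lceil p_\sigma\rceil-\lceil p_{\sigma''}\rceil}\,e_{\sigma''}$. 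Because facets of $\sigma$ in $D$ coincide with facets of $\sigma$ in $D^+$, the two differentials agree under the bijection, so it is an isomorphism of complexes; that $\Phi(S(d))_\bullet$ is really the complex \eqref{eq:FM} uses only that $d$ nef degenerates the relevant spectral sequence, via Lemma~\ref{lem:vanishing}.

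I expect the chief obstacle to be pinning down the differential of the descended complex $G_\bullet$ (and of its transform) precisely enough to justify the two facts used above: that summing over the facets of one freely chosen lift reproduces it, and that this is representative-independent, so the lift may be chosen adapted to each basis monomial $x^\beta$. Tied to this is the bookkeeping identity $\beta\mapsto\beta+\lfloor\gamma_{\sigma''}\rfloor-\lfloor\gamma_\sigma\rfloor=\lfloor p_{\sigma''}\rfloor$, which is what makes the Fourier--Mukai differential land in exactly the summand the cellular differential predicts; and the small box lemma on the behaviour of $\lceil\cdot\rceil$ and $\lfloor\cdot\rfloor$ under passage to a face, which is needed both for well-definedness of labels and to see that every map in sight is an honest monomial. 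The degenerate cells of $E$, where $S_{d+\deg x^{\lfloor\gamma_\sigma\rfloor}}=0$, contribute zero on both sides and cause no trouble.
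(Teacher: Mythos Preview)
Your proposal is correct and follows essentially the same route as the paper's own proof. The paper isolates your termwise bijection as a separate Lemma~\ref{lem:terms} (cells of $D$ over a given cell of $E$ $\leftrightarrow$ monomials of the appropriate degree, via $\sigma'\mapsto x^{\lfloor p_{\sigma'}\rfloor}$) and then checks twists and differentials exactly as you do; your explicit introduction of $D^+$ and the identification $E\cong D^+/M$ is a clean way to phrase the same picture, and your observation that facets of a cell of $D$ in $D^+$ already lie in $D$ (since $D$ is a subcomplex) is precisely what makes the lift-dependent description of $\partial_{G}$ line up with $\partial_F$.
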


We address the modules and the maps separately.  Note that the quotient of $D$ by the image of $M$ maps into $E\subseteq(M\otimes\RR)/M$ by identifying $M$ with the coset $\alpha+M$.  We will write points in $E$ as cosets $\gamma+M$ with $\gamma\in 0+M\otimes\RR\subseteq\RR^r$, so that the image of the point $\alpha+\gamma\in D\subseteq \alpha\otimes 1+M\otimes\RR$ in $E$ is $\gamma+M$.

\begin{lem}\label{lem:terms}
	Given $\gamma\in M\otimes\RR$ there is a one-to-one correspondence between the open cells in $D$ with image containing $\gamma+M\in E$ and the monomials of degree $d+\deg x^{\lfloor\gamma\rfloor}$ in $S$.
\end{lem}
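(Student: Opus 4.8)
The plan is to realize the claimed correspondence as a composite of two bijections through the auxiliary set $T=\{m\in M:\alpha+\gamma+m\geq 0\}$, where throughout I identify $\gamma\in M\otimes\RR$ and $m\in M$ with their images in $\RR^r$ and $\ZZ^r$ under the maps of diagram~\eqref{eq:diagram}, and $\geq 0$ is componentwise. First I would pass from open cells of $D$ lying over $\gamma+M$ to $T$, then from $T$ to monomials of degree $d+\deg x^{\lfloor\gamma\rfloor}$.

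\emph{From cells over $\gamma+M$ to $T$.} The map $D\to E$ sends $\alpha+v$ to $v+M$, so a point of $D$ has image $\gamma+M$ exactly when it equals $\alpha+\gamma+m$ for some $m\in M$, and such a point actually lies in $D$ precisely when $m\in T$. I would assign to an open cell $\sigma$ of $D$ whose image contains $\gamma+M$ the element $m\in T$ with $\alpha+\gamma+m\in\sigma$, and conversely assign to $m\in T$ the open cell of $D$ containing $\alpha+\gamma+m$. That these are well defined and mutually inverse reduces to a single fact: no open cell of $D$ contains two distinct points differing by an integer vector. I expect this to be the only step requiring genuine care. It follows from the observation already used to label $D$ --- two points in the relative interior of a single cell are not separated by any wall of $C$, hence have equal ceilings --- together with $\lceil p+m\rceil=\lceil p\rceil+m$ for $m\in\ZZ^r$: if $p$ and $p+m$ lie in one open cell then $\lceil p\rceil+m=\lceil p+m\rceil=\lceil p\rceil$, forcing $m=0$. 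Thus each qualifying cell contains exactly one point of the fiber over $\gamma+M$, and each point of that fiber lies in exactly one open cell, giving the bijection.

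\emph{From $T$ to monomials of degree $d+\deg x^{\lfloor\gamma\rfloor}$.} Because $\alpha+m$ is integral, the inequality $\alpha+\gamma+m\geq 0$ is equivalent to $\alpha+\lfloor\gamma\rfloor+m\geq 0$, since coordinatewise $(\alpha+m)_i+\gamma_i\geq 0$ iff the integer $(\alpha+m)_i$ is at least $\lceil-\gamma_i\rceil=-\lfloor\gamma_i\rfloor$. Hence $m\mapsto\alpha+\lfloor\gamma\rfloor+m$ identifies $T$ with the nonnegative vectors in the coset $\alpha+\lfloor\gamma\rfloor+M\subseteq\ZZ^r$, and by exactness of the top row of~\eqref{eq:diagram} this coset equals $\{v\in\ZZ^r:\deg x^v=\deg x^{\alpha}+\deg x^{\lfloor\gamma\rfloor}=d+\deg x^{\lfloor\gamma\rfloor}\}$, whose nonnegative members are exactly the exponent vectors of the degree $d+\deg x^{\lfloor\gamma\rfloor}$ monomials of $S$. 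Composing the two bijections, the open cell of $D$ containing $\alpha+\gamma+m$ corresponds to the monomial $x^{\alpha+\lfloor\gamma\rfloor+m}$, which is the asserted correspondence; note that nefness of $d$ is not used in this argument.
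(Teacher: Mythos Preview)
Your proof is correct and follows essentially the same route as the paper's. The paper writes the bijection directly---sending an open cell $\sigma$ to $x^{\lfloor\beta\rfloor}$ for any preimage $\beta\in\sigma$ of $\gamma+M$, and conversely sending $x^\xi$ to the cell containing $\gamma+\xi-\lfloor\gamma\rfloor$---while you factor through the auxiliary set $T=\{m\in M:\alpha+\gamma+m\geq 0\}$; since $\beta=\alpha+\gamma+m$ and $\lfloor\beta\rfloor=\alpha+\lfloor\gamma\rfloor+m$, the two descriptions coincide, and the key ingredient in both is that points of a single open cell share the same ceiling (equivalently floor).
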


\begin{proof}
	We construct a bijection.  Given an open cell $\sigma\subset D\subset\RR^r$ whose image contains $\gamma+M$ let $\beta\in\sigma$ be a preimage of $\gamma+M$ and consider the monomial $x^{\lfloor\beta\rfloor}$.  It is well defined because $\beta\geq 0$ by definition of $D$ and $\lfloor\beta\rfloor\neq\lfloor\beta'\rfloor$ would	imply $\beta$ and $\beta'$ not both in $\sigma$ (see, e.g., the proof of Lemma~\ref{lem:lcms}).

	Since the image $\beta-\alpha+M$ of $\beta$ in $E$ is equal to $\gamma+M$ we have $\beta-\alpha-\gamma\in M$.  Set $\delta=\beta-\alpha-\gamma$, so that
	\[\lfloor\beta\rfloor-\alpha=\lfloor\beta-\alpha\rfloor=\lfloor\gamma+\delta\rfloor=\lfloor\gamma\rfloor+\delta\]
	and thus
	\[\deg x^{\lfloor\beta\rfloor}=\deg x^\alpha+\deg x^{\lfloor\beta\rfloor-\alpha}-0=d+\deg x^{\lfloor\gamma\rfloor+\delta}-\deg x^\delta=d+\deg x^{\lfloor\gamma\rfloor}.\]

	Conversely, let $x^\xi$ be a monomial of degree $d+\deg x^{\lfloor\gamma\rfloor}$ in $S$.  Then $\deg x^\xi=\deg x^{\alpha+\lfloor\gamma\rfloor}$ so $\xi-\alpha-\lfloor\gamma\rfloor\in M$.  Define $\beta=\gamma+\xi-\lfloor\gamma\rfloor$, so that $\beta-\alpha+M=\gamma+M$.  We also have $\gamma-\lfloor\gamma\rfloor\geq 0$ and $\xi\geq 0$ so $\beta\geq 0$.  Finally $\beta-\alpha\in M\otimes\RR$ because $\gamma$ is in this set, so $\beta\in D$ is a preimage of $\gamma+M$ in some open cell.

	Furthermore
	\[\lfloor\beta\rfloor=\lfloor\gamma+\xi-\lfloor\gamma\rfloor\rfloor=\lfloor\gamma\rfloor+\lfloor\xi\rfloor-\lfloor\gamma\rfloor=\xi\]
	and if $\xi=\lfloor\beta\rfloor$ then
	\[\gamma+\xi-\lfloor\gamma\rfloor=\gamma+\lfloor\beta\rfloor-\lfloor\gamma\rfloor=\gamma+\alpha+\delta=\beta\]
	so these maps are inverses.
\end{proof}

Note that this correspondence does not match the labels we have chosen for the faces, which we obtained by rounding \emph{up}.  In general terms will be indexed by greatest common divisors but labeled by least common multiples.

We can think of the Fourier--Mukai transform as ``unfolding'' the cell complex on the torus across the polytope of monomials of degree $d$.  The dimensions of the vector spaces $S_{d+\deg x^{\lfloor\gamma\rfloor}}$ record the number of cells of each type occurring within the polytope, and multiplication in the second coordinate of the tensor product limits the map to adjacent cells in the cover.  For a visual representation of this see Figure~\ref{fig:table}.

\begin{proof}[Proof of Theorem~\ref{thm:FM-of-S}]
	Let $\sigma$ be a cell of $D$ and $\tau$ a boundary cell of $\sigma$.  Choose $\alpha+\gamma$ in the interior of $\sigma$ and $\alpha+\delta$ in the interior of $\tau$.

	In $F_\bullet$ these correspond directly to terms
	\[\xymatrix{
		S\left(-\deg x^{\alpha+\lceil\delta\rceil}\right) &&& S\left(-\deg x^{\alpha+\lceil\gamma\rceil}\right)\ar[lll]_{\varepsilon(\sigma,\tau)x^{\lceil\gamma\rceil-\lceil\delta\rceil}}
	}\]
	in homological index $\dim\sigma$ where $\dim\tau=\dim\sigma-1$.

	By Lemma~\ref{lem:terms} they also correspond to monomials $x^{\alpha+\lfloor\gamma\rfloor}$ and $x^{\alpha+\lfloor\delta\rfloor}$ of degrees $d+\deg x^{\lfloor\gamma\rfloor}$ and $d+\deg x^{\lfloor\delta\rfloor}$, respectively.  These give basis elements $1\otimes x^{\alpha+\lfloor\beta\rfloor}$ and $1\otimes x^{\alpha+\lfloor\delta\rfloor}$ in the domain and target of the map
	\[\xymatrix{
		S(\deg x^{\lfloor-\delta\rfloor})\otimes_\CC S_{d+\deg x^{\lfloor\delta\rfloor}} &&&& S(\deg x^{\lfloor-\gamma\rfloor})\otimes_\CC S_{d+\deg x^{\lfloor\gamma\rfloor}}\ar[llll]_-{\varepsilon(\sigma,\tau)x^{\lceil\gamma\rceil-\lceil\delta\rceil}\otimes x^{\lfloor\delta\rfloor-\lfloor\gamma\rfloor}}
	}\]
	from $\Phi(S(d))$ (see Section~\ref{sec:derived}), which occurs in the same homological index.  We can see that $F(d)_\bullet$ also has the right twists, as
	\[d-\deg x^{\alpha+\lceil\gamma\rceil}=d-d+\deg x^{-\lceil\gamma\rceil}=\deg x^{\lfloor-\gamma\rfloor}.\]
	The basis element $1\otimes x^{\alpha+\lfloor\gamma\rfloor}$ is sent to
	\[\varepsilon(\sigma,\tau)x^{\lceil\gamma\rceil-\lceil\delta\rceil}\otimes x^{\lfloor\delta\rfloor-\lfloor\gamma\rfloor}x^{\alpha+\lfloor\gamma\rfloor}=\varepsilon(\sigma,\tau)x^{\lceil\gamma\rceil-\lceil\delta\rceil}(1\otimes x^{\alpha+\lfloor\delta\rfloor}),\]
	so both maps are given by multiplication with $\varepsilon(\sigma,\tau)x^{\lceil\gamma\rceil-\lceil\delta\rceil}$ on the corresponding summands.  Since all summands and nonzero maps in both $F_\bullet$ and $G_\bullet$ arise in this way (see Lemma~\ref{lem:vanishing}) they are isomorphic by the assignment in Lemma~\ref{lem:terms}.
\end{proof}

\begin{cor}\label{cor:saturation}
	The $B$-saturation of $\trunc_d(S)$ is all of $S$.
\end{cor}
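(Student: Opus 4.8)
\section*{Proof proposal for Corollary~\ref{cor:saturation}}

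The plan is to reduce the claim to the identity $\tilde{\trunc_d(S)}=\oh_X$ of subsheaves of $\tilde S=\oh_X$, and then obtain that identity from Theorems~\ref{thm:res} and~\ref{thm:FM-of-S}. The reduction is standard: if $\tilde{\trunc_d(S)}=\oh_X$ then $S/\trunc_d(S)$ has vanishing sheafification, and since it is finitely generated it is annihilated by some power $B^N$ of the irrelevant ideal; hence $B^N\subseteq\trunc_d(S)$, so $1$ lies in the $B$-saturation of $\trunc_d(S)$, which is therefore all of $S$.

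To establish $\tilde{\trunc_d(S)}=\oh_X$ I would argue as follows. By Theorem~\ref{thm:res} the complex $F_\bullet$, and hence its twist $F(d)_\bullet$, is a free resolution of $\trunc_d(S)$, respectively of $\trunc_d(S)(d)$; sheafifying, $\tilde{F(d)_\bullet}$ is a locally free resolution of $\tilde{\trunc_d(S)}(d)$. On the other hand, by Theorem~\ref{thm:FM-of-S} we have $F(d)_\bullet\cong\Phi(S(d))$, and $\Phi(S(d))$ is, by its construction in Section~\ref{sec:derived}, $\Gamma_*$ of the Fourier--Mukai transform of $\oh(d)$ along the Hanlon--Hicks--Lazarev resolution of the diagonal; since $d$ is nef, Lemma~\ref{lem:vanishing} collapses the relevant spectral sequence to a single row, so this transform is an honest resolution of $\oh(d)=\tilde{S(d)}$, and applying $\Gamma_*$ and sheafifying recovers it. Comparing $H_0$ gives $\tilde{\trunc_d(S)}(d)\cong\oh(d)$, so after untwisting $\tilde{\trunc_d(S)}$ is an ideal subsheaf of $\oh_X$ that is abstractly isomorphic to $\oh_X$; on the integral projective variety $X$ this forces $\tilde{\trunc_d(S)}=\oh_X$, since a nonzero $\oh_X$-module endomorphism of $\oh_X$ is multiplication by a nonzero scalar and its image is all of $\oh_X$.

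There is also a shorter route that avoids Theorem~\ref{thm:FM-of-S}: by Definition~\ref{def:trunc-S} the ideal $\trunc_d(S)$ contains the ideal $(S_d)$ generated by the degree-$d$ monomials, which are exactly the global sections $\Gamma(X,\oh(d))$. As $d$ is nef, $\oh(d)$ is base point free, so these sections have no common zero on $X$; in Cox's presentation $X=(\Spec S\setminus V(B))/G$ this says $V((S_d))\subseteq V(B)$, whence $B\subseteq\sqrt{(S_d)}$ and $B^N\subseteq(S_d)\subseteq\trunc_d(S)$ for $N\gg 0$, giving the claim directly.

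The main obstacle in the first route is bookkeeping rather than conceptual: one must ensure that the quasi-isomorphism coming from the diagonal resolution and the augmentation coming from the cellular resolution identify the same sheaf in homological degree $0$, which I would circumvent by invoking only an \emph{abstract} isomorphism together with the rigidity of $\oh_X$ as an ideal sheaf, as above. In the second route the only delicate point is the implication ``base point free $\Rightarrow V((S_d))\subseteq V(B)$'', i.e.\ checking on each affine chart $U_\sigma$ that the vertex section of the polytope $P_d$ trivializes $\oh(d)$; this is standard, so the second proof is essentially a one-liner.
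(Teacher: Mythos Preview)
Your first route is essentially the paper's own argument: the paper deduces $\widetilde{\trunc_d(S)(d)}\simeq\oh(d)$ from Theorems~\ref{thm:res} and~\ref{thm:FM-of-S} together with the general fact that a Fourier--Mukai transform along a resolution of the diagonal is quasi-isomorphic to the input, and then invokes Cox's correspondence to translate this into the saturation statement. Your extra step---passing from an abstract isomorphism $\widetilde{\trunc_d(S)}\cong\oh_X$ to an equality of ideal subsheaves via rigidity of $\oh_X$---is a detail the paper absorbs into its citation of Cox, so you have arguably been more careful there.

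Your second route is correct and genuinely different: it bypasses Theorem~\ref{thm:FM-of-S} entirely, using only that $(S_d)\subseteq\trunc_d(S)$ and that nef implies base point free, hence $V((S_d))\subseteq V(B)$ and $B^N\subseteq\trunc_d(S)$. This is shorter and more elementary, and it makes clear that Corollary~\ref{cor:saturation} is really a statement about nef divisors rather than about the Fourier--Mukai transform. What the paper's approach buys, by contrast, is a demonstration that Theorem~\ref{thm:FM-of-S} has immediate algebraic content---the corollary is placed there partly as a sanity check and first application of the main isomorphism, and the elementary proof would obscure that role.
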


\begin{proof}
	The Fourier--Mukai transform of a complex of sheaves using a resolution of the diagonal is quasi-isomorphic to the original complex \cite[Example~5.4.(i)]{Huy06}.  Thus the sheafification of $\Phi(S(d))$ is quasi-isomorphic to $\oh(d)$, so its only homology is in index 0 and that homology is isomorphic to $\oh(d)$.  By Theorem~\ref{thm:FM-of-S} $\Phi(S(d))$ has homology $\trunc_d(S)(d)$, so $\widetilde{\trunc_d{S}(d)}\simeq\tilde S(d)$ which is equivalent to the desired statement by \cite[Theorem~3.7]{Cox95}.
\end{proof}

\section{Homology of short virtual resolutions}

We can use our definition of ceiling truncation for $S$ to truncate an arbitrary finitely generated $S$-module past a sufficiently positive degree depending on the module.  The resulting functor computes the homology of the Fourier--Mukai transform $\Phi$ for inputs other than twists of $S$.

\begin{defn}\label{def:trunc-M}
	Let $Q$ be a module with free presentation $K\gets L$ and let $d\in\Pic X$ be a degree with $d-a\in\Nef X$ for all summands $S(-a)$ of $K$ and $L$.  Define the \emph{ceiling truncation} $\trunc_d(Q)$ to be the cokernel of the map $\trunc_d(K)\gets\trunc_d(L)$, where each summand $S(-a)$ is truncated to $\trunc_d(S(-a))=\trunc_{d-a}(S)(-a)$ using Definition~\ref{def:trunc-S} and the map is induced by inclusion.
\end{defn}

\begin{rem}\label{rem:truncation}
	Other types of truncation exist in the literature, for instance in \cite[Definition~5.1]{MS04} and \cite[\S 4.4]{BC17}, which take the submodule of $Q$ generated by a subset of degrees.  In contrast, the ceiling truncation will not always be a submodule of $Q$.  In a sense we have forfeited this property for a better chance at exactness.  Indeed, if truncation is to be an exact functor it must satisfy the form of Definition~\ref{def:trunc-M}.
\end{rem}

We construct a truncation functor and see that it is still not exact.  However, the failure of exactness now corresponds to a previously unknown feature of the Fourier--Mukai transform.

\begin{lem}\label{lem:functor}
	For each $d\in\Pic X$, ceiling truncation $Q\mapsto\trunc_d(Q)$ is a functor on the category of modules $Q$ satisfying the hypotheses of Definition~\ref{def:trunc-M}.
\end{lem}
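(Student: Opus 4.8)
The plan is to bootstrap functoriality from the single piece of structure Definition~\ref{def:trunc-M} genuinely provides, namely the assignment $S(-a)\mapsto\trunc_{d-a}(S)(-a)\subseteq S(-a)$ on free modules, and then to invoke the standard device that extends an additive functor on free (projective) modules to all presented modules. Let $\mathcal C$ denote the full subcategory of finitely generated free $S$-modules all of whose summands $S(-a)$ satisfy $d-a\in\Nef X$. The core claim, call it $(\ast)$, is that every $S$-module map $\phi\colon F\to F'$ between objects of $\mathcal C$ restricts to a map $\trunc_d(F)\to\trunc_d(F')$. Granting $(\ast)$, the rule $\trunc_d(\phi):=\phi|_{\trunc_d(F)}$ is additive and respects identities and composites, so $\trunc_d$ is an additive functor on $\mathcal C$; note that the map $\trunc_d(L)\to\trunc_d(K)$ in Definition~\ref{def:trunc-M} is exactly $\trunc_d$ applied to the presentation map $L\to K$, so $(\ast)$ is needed even for the definition to make sense.

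To prove $(\ast)$ I would use the multigrading to reduce to $F=S(-a)$, $F'=S(-b)$ with $\phi$ multiplication by a single monomial $x^c$ satisfying $c\geq 0$ and $\deg x^c=a-b$ (vanishing matrix entries need no argument, and a general entry is a $\CC$-linear combination of such monomial multiplications). With $e=d-a$ and $e'=d-b$, both nef by hypothesis, this amounts to $x^c\cdot\trunc_e(S)\subseteq\trunc_{e'}(S)$, and since $\trunc_e(S)$ is generated by the labels of the vertices of the complex $D$ built from $e$ (Definition~\ref{def:trunc-S}), it suffices to see that $x^c$ times each such vertex label lies in $\trunc_{e'}(S)$. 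Fixing a monomial representative $\alpha\geq 0$ with $\deg x^\alpha=e$ and a vertex $v$ of $D$, I would observe that $\alpha+c\geq 0$ represents $e'$, that $v+c\in\RR^r_{\geq 0}$, and that — because the cell structure $C$ is invariant under translation by $\ZZ^r$ — the point $v+c$ is a vertex of the complex built from $e'$ with representative $\alpha+c$, carrying the label $x^{\lceil v+c\rceil}=x^{\lceil v\rceil}\cdot x^c$. Hence $x^c$ times the label of $v$ is a vertex label of that complex and so lies in $\trunc_{e'}(S)$. This geometric fact — that nonnegative integral translation sends the labeled complex for one nef degree onto part of the labeled complex for a larger nef degree, multiplying labels by the corresponding monomial — is the step I expect to demand the most care; everything else is formal.

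Once $\trunc_d$ is a functor on $\mathcal C$, the rest is the familiar argument. For objects $Q,Q'$ of the stated category fix presentations $K\gets L$ and $K'\gets L'$ with all four modules in $\mathcal C$. A map $f\colon Q\to Q'$ lifts to a chain map of presentations by projectivity of $K$, then of $L$; applying $\trunc_d$ termwise and passing to cokernels gives $\trunc_d(f)\colon\trunc_d(Q)\to\trunc_d(Q')$. Two lifts of $f$ differ by a chain map $g$ inducing $0$ on $H_0$, so $g_K\colon K\to K'$ has image inside $\ker(K'\to Q')=\im(L'\to K')$ and therefore equals, by projectivity of $K$, a composite $K\xrightarrow{s}L'\to K'$ for some $s$; applying the functor $\trunc_d$ on $\mathcal C$ shows $\trunc_d(g_K)$ factors through $\im\bigl(\trunc_d(L')\to\trunc_d(K')\bigr)$, hence induces the zero map on cokernels. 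Thus $\trunc_d(f)$ is independent of the chosen lift, manifestly respects identities and composition, and — taking $Q=Q'$ with $f$ the identity map — is independent of the chosen presentation up to canonical isomorphism. This exhibits $\trunc_d$ as a functor.
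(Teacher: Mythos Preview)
Your proof is correct and follows essentially the paper's approach: both reduce the key containment $(\ast)$ to a single monomial multiplication and invoke the identity $\lceil\delta+c\rceil=\lceil\delta\rceil+c$ for integral $c\geq 0$, which is exactly your vertex-translation observation. The only cosmetic differences are that the paper argues independence of presentation by splitting off a minimal presentation (so that the extraneous summands are visibly trivial after truncation) rather than via your chain-homotopy argument, and it leaves independence of the lift implicit where you spell it out.
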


\begin{proof}
We first show that $\trunc_d(Q)$ is well-defined: that everything in $\trunc_d(L)$ maps to $\trunc_d(K)$ and that up to isomorphism $\trunc_d(Q)$ does not depend on the choice of presentation.

For the former it suffices to consider a component $S\gets S(-b)$ given by multiplication with a monomial $x^\beta$ of degree $b$.  According to Definition~\ref{def:trunc-S} a monomial in $\trunc_d(S(-b))=\trunc_{d-b}(S)(-b)$ has the form $x^{\lceil\delta\rceil}$ for $\delta\geq 0$ with $\deg x^\delta=d-b$ (using rational notation from Lemma~\ref{lem:vanishing}).  Then $\beta+\lceil\delta\rceil=\lceil\beta+\delta\rceil$, where $\beta+\delta$ is in the polytope of exponents of degree $d$ because $\deg x^{\beta+\delta}=d-b+b=d$ and $\beta+\delta\geq 0$.  Hence the image $x^\beta\cdot x^{\lceil\delta\rceil}$ is contained in $\trunc_d(S)$.

By \cite[Theorem~1.6]{Eis05}, each presentation $K'\gets L'$ of $Q$ will contain a minimal presentation as a summand, where all other summands have the form $0\gets S(-a)\gets S(-a)\gets 0$.  These trivial summands have trivial truncations, and the minimal presentation is unique up to isomorphism so the ceiling truncation is as well.

Finally a map $P\to Q$ of $S$-modules lifts to a map of presentations by \cite[Lemma~20.3]{Eis95} which induces a map $\trunc_d(P)\to\trunc_d(Q)$.  This respects composition because the maps on truncated presentations do, since they are simply restrictions of the original lifts.
\end{proof}

\begin{rem}
	Although we have only defined the ceiling truncation functor for sufficiently positive $d$, the author believes that a general definition is possible with a more technical construction.
\end{rem}

\begin{cor}\label{cor:homology}
	Let $Q$ be a finitely generated graded $S$-module and let $K_\bullet$ be a free resolution of $Q$.  If $d$ satisfies $d-a\in\Nef X$ for all summands $S(-a)$ in $K_\bullet$ then the transform $\Phi(Q(d))$ and the complex $\trunc_d(K_\bullet)(d)$ have isomorphic homology.
\end{cor}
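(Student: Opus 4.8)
The plan is to apply the functor $Q\mapsto\Phi(Q(d))$ termwise to the resolution $K_\bullet$ and to play the two spectral sequences of the resulting double complex against one another. First I would observe that every summand $S(-a)$ of every $K_i$ has $d-a\in\Nef X$, so by Lemma~\ref{lem:vanishing} the cohomology obstructing degeneration of the Fourier--Mukai spectral sequence of $\oh(d-a)$ vanishes and $\Phi(S(d-a))$ is defined; by Theorem~\ref{thm:FM-of-S} it is the twist by $d-a$ of the cellular resolution of Construction~\ref{cons:F} for the degree $d-a$, hence by Theorem~\ref{thm:res} a complex of free modules of length $\le\dim X$ whose only homology is $\trunc_{d-a}(S)(d-a)$, in homological degree $0$. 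Since $\Phi$ is additive, $\Phi(K_i(d))=\bigoplus\Phi(S(d-a))$ has homology $\trunc_d(K_i)(d)$ in degree $0$ (Definition~\ref{def:trunc-M}), and since $\Phi$ is a functor the differentials of $K_\bullet(d)$ induce chain maps between these, making $C_{\bullet\bullet}:=\Phi(K_\bullet(d))$ a double complex; it is bounded below and of bounded cellular length, so both its spectral sequences converge to $H_\bullet(T_\bullet)$, where $T_\bullet:=\mathrm{Tot}(C_{\bullet\bullet})$.

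Running the spectral sequence of $C_{\bullet\bullet}$ that takes homology first in the cellular direction, the paragraph above gives a first page equal to the complex $\trunc_d(K_\bullet)(d)$ placed in a single row. Here one must check that the induced horizontal differentials are the maps defining $\trunc_d$ of a complex as in Definition~\ref{def:trunc-M}: $\Phi$ sends a multiplication map $x^\beta\colon S(-b)\to S$ to a map whose second tensor coordinate is ordinary multiplication (see Section~\ref{sec:derived}), so on $H_0$ it induces multiplication by $x^\beta$ from $\trunc_{d-b}(S)(-b)$ into $\trunc_d(S)$, which is exactly the inclusion-induced map shown to be well defined in the proof of Lemma~\ref{lem:functor}. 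Hence this spectral sequence degenerates at the second page and $H_n(T_\bullet)\cong H_n(\trunc_d(K_\bullet)(d))$.

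It remains to identify $H_n(T_\bullet)$ with $H_n(\Phi(Q(d)))$ via the other spectral sequence, which takes homology first in the resolution direction of $K_\bullet$. By the description of the transform in Section~\ref{sec:derived}, for a fixed cellular degree $j$ the corresponding row of $C_{\bullet\bullet}$ is $\bigoplus_\gamma S(\deg x^{\lfloor-\gamma\rfloor})\otimes_\CC\Gamma(X,\tilde K_\bullet(d+\deg x^{\lfloor\gamma\rfloor}))$, summed over the $j$-cells $\gamma$ of $E$. The crucial step, which I expect to be the main obstacle, is to compute the homology of $\Gamma(X,\tilde K_\bullet(d+\deg x^{\lfloor\gamma\rfloor}))$: each $\tilde K_i(d+\deg x^{\lfloor\gamma\rfloor})$ is a sum of line bundles $\oh(d-a+\deg x^{\lfloor\gamma\rfloor})$ with $d-a$ nef and $\gamma\in M\otimes\QQ$, hence $\Gamma$-acyclic by Lemma~\ref{lem:vanishing}, so feeding the exact complex $\tilde K_\bullet(d+\deg x^{\lfloor\gamma\rfloor})\to\tilde Q(d+\deg x^{\lfloor\gamma\rfloor})$ through the usual dimension-shifting argument (split into short exact sequences, chase cohomology, use $H^{>\dim X}=0$) shows both that $H^{>0}(X,\tilde Q(d+\deg x^{\lfloor\gamma\rfloor}))=0$ --- so $\Phi(Q(d))$ is itself defined --- and that $\Gamma(X,\tilde K_\bullet(d+\deg x^{\lfloor\gamma\rfloor}))$ has homology $\Gamma(X,\tilde Q(d+\deg x^{\lfloor\gamma\rfloor}))$ in degree $0$ and none elsewhere. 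Therefore this spectral sequence has first page $\bigoplus_\gamma S(\deg x^{\lfloor-\gamma\rfloor})\otimes_\CC\Gamma(X,\tilde Q(d+\deg x^{\lfloor\gamma\rfloor}))=\Phi(Q(d))_j$ concentrated in the column $i=0$, degenerates, and yields $H_n(T_\bullet)\cong H_n(\Phi(Q(d)))$; combining with the first computation proves the corollary. Equivalently, the vanishing just obtained says $\Phi(-(d))$ is exact on the terms and syzygy modules of $K_\bullet$, so it carries the resolution to a complex whose totalization is quasi-isomorphic to $\Phi(Q(d))$. The leftover bookkeeping --- signs, incidence functions, and naturality of the isomorphism of Theorem~\ref{thm:FM-of-S} in the chosen summand --- is routine; the real content is the propagation of Lemma~\ref{lem:vanishing} along $K_\bullet$.
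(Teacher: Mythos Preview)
Your argument is correct and is essentially the paper's proof: both form the double complex $\Phi(K_\bullet(d))$ (equivalently, the second--coordinate degree~$0$ part of $K'_\bullet\otimes G_\bullet$ over $R$) and compare its two spectral sequences, using Theorem~\ref{thm:FM-of-S} to identify the column direction with $\trunc_d(K_\bullet)(d)$. The only expository difference is in the row direction: the paper observes that $G_j\otimes K'_\bullet$ is exact over $R$ because $G_j$ is free and then passes to the graded piece, whereas you work directly with global sections and invoke Lemma~\ref{lem:vanishing} plus dimension shifting---your extra care here in fact supplies the identification $Q_{d+\deg x^{\lfloor\gamma\rfloor}}\cong\Gamma\bigl(X,\tilde Q(d+\deg x^{\lfloor\gamma\rfloor})\bigr)$ that the paper uses implicitly.
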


\begin{proof}
	Without loss of generality replace $K_\bullet$ with $K_\bullet(d)$ and take $d=0$, since $\trunc_d(K_\bullet)(d)=\trunc_0\left((K_\bullet)(d)\right)$ by Definition~\ref{def:trunc-M}.  We will use the algebraic description of $\Phi(Q)$ from \eqref{eq:FM}.

	Let $\tilde G_\bullet$ be the HHL resolution of the diagonal as in Section~\ref{sec:derived} and consider the double complex $K'_\bullet\otimes G_\bullet$, where $K'_\bullet$ is the pullback of $K_\bullet$ to $R=S\otimes S$ by $q$.  Its columns are of the form $K'_j\otimes G_\bullet$.  Having twisted appropriately, we know from Theorem~\ref{thm:FM-of-S} that each $\Phi(K_j)$, the part of $K'_j\otimes G_\bullet$ in degree 0 for the second coordinate, has homology $\trunc_0(K_j)$ concentrated in the zeroth row.

	The rows of the double complex have the form $G_j\otimes K'_\bullet$, which has zeroth homology $G_j\otimes Q'$ for $Q'$ the pullback of $Q$, since $G_j$ is free.  In the spectral sequence of the double complex all maps are zero starting at the second page for both directions.  Focusing on degree 0 for the second coordinate, one direction gives the homology of the single row $\trunc_0(K_\bullet)$ and the other gives the homology of the single column $\Phi(Q)$.
\end{proof}

\begin{cor}
	The ceiling truncation of a resolution at $d$ has irrelevant homology for $d$ satisfying the hypothesis of Corollary~\ref{cor:homology}.
\end{cor}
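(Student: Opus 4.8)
The plan is to read this off Corollary~\ref{cor:homology} together with the argument already used for Corollary~\ref{cor:saturation}. Let $K_\bullet$ be a free resolution of a finitely generated graded module $Q$ with $d-a\in\Nef X$ for every summand $S(-a)$ of $K_\bullet$. By Corollary~\ref{cor:homology} the complexes $\trunc_d(K_\bullet)(d)$ and $\Phi(Q(d))$ have isomorphic homology as graded $S$-modules, so it suffices to prove that $H_i\!\left(\Phi(Q(d))\right)$ is annihilated by a power of the irrelevant ideal $B$ for every $i>0$; the degree-$0$ homology is $\trunc_d(Q)(d)$, which is the intended value and need not be irrelevant, so ``irrelevant homology'' is understood in positive homological degrees.

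First I would twist so that $d=0$, exactly as at the start of the proof of Corollary~\ref{cor:homology}. Because sheafification is exact, $\widetilde{K_\bullet}$ is a genuine resolution of $\tilde Q$ by direct sums of line bundles, and the Fourier--Mukai transform $Rp_*\!\left[(Lq^*\tilde Q)\otimes_L\tilde G_\bullet\right]$ is quasi-isomorphic to $\tilde Q$ because $\tilde G_\bullet$ resolves the diagonal \cite[Example~5.4.(i)]{Huy06}. Under the hypothesis on $d$ the spectral sequence of the derived pushforward collapses onto the single row whose $\Gamma_*$ is $\Phi(Q)$ by Construction~\ref{cons:phi}, the collapse being fed by the vanishing of Lemma~\ref{lem:vanishing} applied termwise along $\widetilde{K_\bullet}$ just as for each $\Phi(K_j)$ in the proof of Corollary~\ref{cor:homology}. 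Hence the sheafification $\widetilde{\Phi(Q)}$ is that transform and so has homology $\tilde Q$ concentrated in degree~$0$. Therefore $\widetilde{H_i(\Phi(Q))}=0$ for $i>0$, and since each $H_i(\Phi(Q))$ is a finitely generated graded $S$-module this forces it to be $B$-torsion by \cite[Theorem~3.7]{Cox95}, exactly as in the proof of Corollary~\ref{cor:saturation}. Transporting along the isomorphism of Corollary~\ref{cor:homology} and untwisting yields the claim for $\trunc_d(K_\bullet)$.

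The one point needing care --- and essentially the only place anything could go wrong --- is the collapse of the spectral sequence: one must confirm that the complex $\Phi(Q(d))$ from Construction~\ref{cons:phi} sheafifies to the full Fourier--Mukai transform of $\tilde Q(d)$ rather than to a single page of an undegenerate spectral sequence, i.e.\ that the hypothesis ``$d-a\in\Nef X$ for every summand $S(-a)$ of $K_\bullet$'' really suffices for a module $Q$ that is not itself a twist of $S$. I would handle this by invoking the double-complex computation in the proof of Corollary~\ref{cor:homology}, which already extracts precisely this row; alternatively one can run that double complex after sheafifying and conclude directly that the totalization computes $\tilde Q(d)$ while the relevant column computes $\widetilde{\Phi(Q(d))}$. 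The remaining ingredients --- exactness of sheafification, the projection formula, and the fact that a finitely generated graded $S$-module sheafifies to zero if and only if it is $B$-torsion --- are standard and already cited in the paper.
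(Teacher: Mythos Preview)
Your proposal is correct and follows the same route the paper intends: combine Corollary~\ref{cor:homology} with the argument of Corollary~\ref{cor:saturation} (the Fourier--Mukai transform is quasi-isomorphic to the original sheaf, so higher homology sheafifies to zero and is thus $B$-torsion). You are more careful than the paper's one-line proof, explicitly flagging and resolving the spectral-sequence degeneration for general $Q$; this is a genuine detail the paper leaves implicit, and your handling via Lemma~\ref{lem:vanishing} along the terms of $\widetilde{K_\bullet}$ is the right fix.
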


\begin{proof}
	See the proof of Corollary~\ref{cor:saturation}.
\end{proof}

We can use this result to find a module $Q$ so that $\Phi(Q(d))$ has nontrivial homology even for $d$ an arbitrarily high multiple of an ample divisor.

\begin{ex}\label{ex:homology}
	Let $X$ be the toric variety with rays the rows of the matrix
	\[\begin{bmatrix}
		1 & 0 & 0\\
		0 & 1 & 0\\
		0 & 0 & 1\\
		-1 & 0 & -1\\
		1 & -1 & 1
	\end{bmatrix}\]
	and maximal cones spanned by the subsets $\{0,1,2\},\{0,1,3\},\{0,2,4\},\{0,3,4\},\{1,2,3\},\{2,3,4\}$. Let $d=(1,1)$ and consider $Q=S/I$ for $I=\angles{x_0x_1,x_1x_2}$.  The variety $X$ is smooth and $md$ satisfies the hypotheses of Corollary~\ref{cor:homology} for $m\geq 2$, yet $\Phi(Q(md))$ is not exact.

	This example was identified using the computer algebra system \emph{Macaulay2} \cite{M2}, but homology can be exhibited by hand for all $m$.  Its generators arise from the term $R(1,-1,1,-1)$ in $G_1$ and have the form $1\otimes x_1\cdot x^\beta$ where $\deg x^\beta =md+(-1,0)$.
\end{ex}

\printbibliography

\end{document}